\def\today{\ifcase \month \or
January \or February \or March \or April \or
   May \or June \or July \or August \or
   September \or October \or November \or December \fi
   space\number\day , \number\year}
  \newcommand\@dotsep{4.5}
  \def\@tocline#1#2#3#4#5#6#7{\relax
 \ifnum #1>\c@tocdepth 
 \else
\par \addpenalty\@secpenalty\addvspace{#2}%
   
   \begingroup \hyphenpenalty\@M
     \@ifempty{#4}{%
     \@tempdima\csname r@tocindent\number#1\endcsname\relax
        }{%
         \@tempdima#4\relax
           }%
      \parindent\z@ \leftskip#3\relax \advance\leftskip\@tempdima\relax
      \rightskip\@pnumwidth plus1em \parfillskip-\@pnumwidth
       #5\leavevmode\hskip-\@tempdima #6\relax
       \leaders\hbox{$\m@th
       \mkern \@dotsep mu\hbox{.}\mkern \@dotsep mu$}\hfill
       \hbox to\@pnumwidth{\@tocpagenum{#7}}\par
       \nobreak
        \endgroup
         \fi}
\begin{document}

\makeatletter
\@addtoreset{figure}{section}
\def\thefigure{\thesection.\@arabic\c@figure}
\def\fps@figure{h,t}
\@addtoreset{table}{bsection}

\def\thetable{\thesection.\@arabic\c@table}
\def\fps@table{h, t}
\@addtoreset{equation}{section}
\def\theequation{
\arabic{equation}}
\makeatother

\newcommand{\bfi}{\bfseries\itshape}

\newtheorem{theorem}{Theorem}
\newtheorem{corollary}[theorem]{Corollary}
\newtheorem{definition}[theorem]{Definition}
\newtheorem{example}[theorem]{Example}
\newtheorem{lemma}[theorem]{Lemma}
\newtheorem{notation}[theorem]{Notation}
\newtheorem{problem}[theorem]{Problem}
\newtheorem{proposition}[theorem]{Proposition}
\newtheorem{question}[theorem]{Question}
\newtheorem{remark}[theorem]{Remark}
\newtheorem{setting}[theorem]{Setting}
\numberwithin{theorem}{section}
\numberwithin{equation}{section}

\newcommand{\todo}[1]{\vspace{5 mm}\par \noindent
\framebox{\begin{minipage}[c]{0.85 \textwidth}
\tt #1 \end{minipage}}\vspace{5 mm}\par}

\renewcommand{\1}{{\bf 1}}

\newcommand{\hotimes}{\widehat\otimes}

\newcommand{\Ad}{{\rm Ad}}
\newcommand{\Alt}{{\rm Alt}\,}
\newcommand{\Ci}{{\mathcal C}^\infty}
\newcommand{\comp}{\circ}
\newcommand{\wt}{\widetilde}

\newcommand{\CC}{{\mathbb C}}
\newcommand{\DD}{{\mathbb D}}
\newcommand{\Hb}{\text{\bf H}}
\newcommand{\N}{\text{\bf N}}
\newcommand{\RR}{{\mathbb R}}
\newcommand{\T}{\text{\bf T}}
\newcommand{\Ub}{\text{\bf U}}

\newcommand{\ph}{\text{\bf P}}
\newcommand{\de}{{\rm d}}
\newcommand{\ev}{{\rm ev}}
\newcommand{\fimes}{\mathop{\times}\limits}
\newcommand{\id}{{\rm id}}
\newcommand{\ie}{{\rm i}}
\newcommand{\Cp}{\textbf {CPos}\,}
\newcommand{\End}{{\rm End}\,}
\newcommand{\Gr}{{\rm Gr}}
\newcommand{\GL}{{\rm GL}}
\newcommand{\Herm}{{\rm Herm}}
\newcommand{\Hilb}{{\bf Hilb}\,}
\newcommand{\Hom}{{\rm Hom}}
\renewcommand{\Im}{{\rm Im}}
\newcommand{\Ker}{{\rm Ker}\,}
\newcommand{\GLH}{\textbf{GrLHer}}
\newcommand{\HLH}{\textbf{HomogLHer}}
\newcommand{\LH}{\textbf{LHer}}
\newcommand{\Kern}{\textbf {Kern}}
\newcommand{\Lie}{\textbf{L}}
\newcommand{\lf}{{\rm l}}
\newcommand{\pr}{{\rm pr}}
\newcommand{\Ran}{{\rm Ran}\,}
\newcommand{\sa}{{\rm sa}}
\newcommand{\Skew}{{\rm Skew}}
\newcommand{\Symm}{{\rm Symm}}

\newcommand{\RK}{{\mathcal P}{\mathcal K}^{-*}}
\newcommand{\spann}{{\rm span}}
\newcommand{\SLH}{\textbf {StLHer}}
\newcommand{\Rg}{\textbf {RepGLH}}
\newcommand{\Rep}{\textbf {sRep}}

\newcommand{\Tr}{{\rm Tr}\,}
\newcommand{\Tran}{\textbf{Trans}}

\newcommand{\G}{{\rm G}}
\newcommand{\U}{{\rm U}}
\newcommand{\Gl}{{\rm GL}}
\newcommand{\SL}{{\rm SL}}
\newcommand{\SU}{{\rm SU}}
\newcommand{\VB}{{\rm VB}}

\newcommand{\Ac}{{\mathcal A}}
\newcommand{\Bc}{{\mathcal B}}
\newcommand{\Cc}{{\mathcal C}}
\newcommand{\Dc}{{\mathcal D}}
\newcommand{\Ec}{{\mathcal E}}
\newcommand{\Fc}{{\mathcal F}}
\newcommand{\Gc}{{\mathcal G}}
\newcommand{\Hc}{{\mathcal H}}
\newcommand{\Kc}{{\mathcal K}}
\newcommand{\Lc}{{\mathcal L}}
\newcommand{\Nc}{{\mathcal N}}
\newcommand{\Oc}{{\mathcal O}}
\newcommand{\Pc}{{\mathcal P}}
\newcommand{\Qc}{{\mathcal Q}}
\newcommand{\Rc}{{\mathcal R}}
\newcommand{\Sc}{{\mathcal S}}
\newcommand{\Tc}{{\mathcal T}}
\newcommand{\Uc}{{\mathcal U}}
\newcommand{\Vc}{{\mathcal V}}
\newcommand{\Xc}{{\mathcal X}}
\newcommand{\Yc}{{\mathcal Y}}
\newcommand{\Zc}{{\mathcal Z}}

\newcommand{\Ag}{{\mathfrak A}}
\newcommand{\hg}{{\mathfrak h}}
\newcommand{\mg}{{\mathfrak m}}
\newcommand{\nng}{{\mathfrak n}}
\newcommand{\pg}{{\mathfrak p}}
\newcommand{\ug}{{\mathfrak u}}
\newcommand{\Sg}{{\mathfrak S}}

\newcommand{\blue}[1]{\textcolor{blue}{#1}}
\newcommand{\red}[1]{\textcolor{red}{#1}}
\newcommand{\green}[1]{\textcolor{green}{#1}}

\markboth{}{}

\makeatletter
\title[Reproducing kernels and positivity of vector bundles]
{Reproducing kernels and positivity \\ of vector bundles in infinite dimensions}
\author{Daniel Belti\c t\u a and  Jos\'e E. Gal\'e}
\address{Institute of Mathematics ``Simion
Stoilow'' of the Romanian Academy,
P.O. Box 1-764, Bucharest, Romania}
\email{beltita@gmail.com, Daniel.Beltita@imar.ro}
\address{Departamento de matem\'aticas and I.U.M.A.,
Universidad de Zaragoza, 50009 Zaragoza, Spain}
\email{gale@unizar.es}
\thanks{This research was partly  supported by Project MTM2010-16679, DGI-FEDER, of the MCYT, Spain. 
Partial financial support is also acknowledged by the first-named author from the Grant
of the Romanian National Authority for Scientific Research, CNCS-UEFISCDI,
project number PN-II-ID-PCE-2011-3-0131, 
and by the second-named author from Project E-64, D.G. Arag\'on, Spain.}
\date{February 3, 2014}

\keywords{vector bundle; reproducing kernel; covariant derivative; Griffiths positivity}
\subjclass[2010]{Primary 46E22; Secondary 47B32, 46L05, 18A05, 58B12}
\makeatother

\begin{abstract}

We investigate the interaction between the existence of reproducing kernels 
on infinite-dimensional Hermitian vector bundles and 
the positivity properties of the corresponding bundles. 
The positivity refers to the curvature form of certain covariant derivatives 
associated to reproducing kernels on the vector bundles under consideration. 
The values of the curvature form are Hilbert space operators, 
and its positivity is thus understood in the usual sense from operator theory. 
\end{abstract}

\maketitle
\tableofcontents

\section{Introduction}
\label{intro}
The idea of positivity plays a central role in Hilbert space operator theory, 
see for instance the whole panel of constructions of Gelfand-Naimark-Segal type, 
in particular dilation theory of completely positive maps or the theory of reproducing kernel Hilbert spaces. 

On the other hand, ideas of positivity and order structures also hold a quite important place in 
branches of mathematics which might seem to be remote from operator theory, 
as it is the case with the complex algebraic geometry; 
see for instance the impressive two-volume treatise \cite{La04}, 
which was devoted to a thorough discussion of that topic. 
We will show in this paper that these ideas of positivity are actually rather close to each other, 
which sheds a fresh light on the relationship between 
the aforementioned theories 
and raises several interesting problems in both these research areas. 

Maybe we should also mention at this point that 
the interaction between complex geometry and operator theory 
has already surfaced in the literature, 
as for instance in the famous theory of the Cowen-Douglas operators (\cite{CD78}), 
without to emphasize particularly the idea of positivity. 
As regards the specific topic of the present paper, we will discuss the relationship between 
the Griffiths positivity of holomorphic vector bundles 
(\cite{Gr69}, \cite{GH78}) 
and the reproducing kernels on infinite-dimensional vector bundles that we have studied recently 
(\cite{BG08}, \cite{BG09}, \cite{BG11}, \cite{BG13}). 

Reproducing kernels and covariant derivatives often occur simultaneously 
on the vector bundles involved 
in various problems in areas such as the geometric quantization, the geometric representation theory of Lie groups, the theory of Cowen-Douglas operators  etc. 
This simple remark additionally motivated the present note, 
as we tried to provide an explanation for the aforementioned occurrence by using the universality properties 
of the reproducing kernels established in our previous paper \cite{BG11}. 
A related issue is the infinite-dimensional extension of 
the Chern correspondence between the connections and the (almost) complex structures 
on the total space of a bundle, 
that we plan to consider in a forthcoming work; 
see Problem~\ref{forth} below for some more details and references.

The simplest setting that illustrates that idea is provided by 
the \emph{tautological vector bundle} $\Pi_\Hc$ corresponding to any complex Hilbert space~$\Hc$. 
Let $\Gr(\Hc)$ be the Grassmann manifold, 
whose points are the closed linear subspaces of~$\Hc$, 
and define 
$\Tc(\Hc)=\{(\Sc,x)\in\Gr(\Hc)\times\Hc\mid x\in\Sc\}$
and $\Pi_\Hc\colon\Tc(\Hc)\to\Gr(\Hc)$, 
$\Pi_\Hc(\Sc,x)=\Sc$. 
Both $\Gr(\Hc)$ and $\Tc(\Hc)$ are complex Banach manifolds and the mapping $\Pi_\Hc$ is a Hermitian holomorphic vector bundle. 
For the purposes of the present investigation 
 it is important to single out two additional structures on this bundle: 

(1) Let $p_1,p_2\colon\Gr(\Hc)\times\Gr(\Hc)\to\Gr(\Hc)$ 
the natural Cartesian projections. 
For any $\Sc\in\Gr(\Hc)$ let us also denote by $p_\Sc$ the orthogonal projection of $\Hc$ onto~$\Sc$. 
Then we can define 
$$
\begin{aligned}
Q_\Hc & \colon\Gr(\Hc)\times\Gr(\Hc) \to{\rm Hom}(p_2^*(\Pi_\Hc),p_1^*(\Pi_\Hc)),\\ 
Q_\Hc&(\Sc_1,\Sc_2)=(p_{\Sc_1})|_{\Sc_2}\colon  \Sc_2\to\Sc_1.
\end{aligned}$$
The mapping $Q_{\Hc}$ is the {\it universal reproducing kernel} 
associated with the Hilbert space $\Hc$ 
(see \cite{BG11} and Example~\ref{ex3} below). 

(2) Just as in the finite-dimensional case, 
there exists a unique {\it Chern connection}~on $\Pi_\Hc$, 
that is, a linear connection which is compatible in the usual sense 
both with the holomorphic structure and with the Hermitian structure
defined by the inner product on every fiber $\Sc$ 
(see Theorem~\ref{main_lin} below). 

Our present paper belongs to a project devoted to understanding 
the relationship between the above universal reproducing kernel 
and linear connections  and how their interaction propagates from the tautological bundle to 
pretty wide classes of reproducing kernels on Hermitian vector bundles, 
in a sense that will be made precise below. 
We already pointed out in \cite{BG13} a certain functorial correspondence 
between reproducing kernels and linear connections on infinite-dimensional vector bundles, 
and in the present paper we take up the study of that circle of ideas 
from the perspective of the complex algebraic geometry, which eventually amounts to curvature positivity properties 
of the vector bundles involved in this discussion. 

We will briefly survey some facts from our earlier papers \cite{BG08}, \cite{BG09}, \cite{BG11}, and \cite{BG13}, 
and will also raise some problems and announce a few partial results with sketchy proofs. 
Full details of these proofs will be published elsewhere. 

The paper is structured as follows.
In Section~\ref{Sect2} we provide some background information on reproducing kernels on vector bundles along with some basic examples. 
Section~\ref{Sect3} is devoted to a discussion of Chern covariant derivatives on Hermitian holomorphic vector bundles,  
which are covariant derivatives that are compatible both with the Hermitian and with the holomorphic structures of these bundles. 
In Section~\ref{Sect4} we introduce the Griffith positivity condition for bundle-valued differential 2-forms 
in our setting of infinite-dimensional bundles. 
In Section~\ref{Sect5}  we finally relate the reproducing kernels to the Griffith positivity property of Hermitian vector bundles. 
The paper concludes by Appendix~\ref{SectA} which includes some basic observations on vector-valued differential forms, 
which are needed in the main body of our paper and were collected there for the reader's convenience.

\section{Reproducing kernels on Hermitian vector bundles}\label{Sect2}

\begin{definition}\label{like}
\normalfont
Let $Z$ be a Banach manifold.
A {\it Hermitian structure} on a smooth Banach vector bundle $\Pi\colon D\to Z$ is a family
$\{(\cdot\mid\cdot)_z\}_{z\in Z}$
with the following properties:

\begin{itemize}
\item[{\rm(a)}]
For every $z\in Z$,
$(\cdot\mid\cdot)_z\colon D_z\times D_z\to{\mathbb C}$
is a scalar product (${\mathbb C}$-linear in the first variable) 
that turns the fiber $D_z$ into a complex Hilbert space. 
\item[{\rm(b)}]

If $V$ is any open subset of $Z$,
and 
$\Psi_V\colon V\times\Ec\to\Pi^{-1}(V)$
is a trivialization (whose typical fiber is the complex Hilbert space $\Ec$) 
of the vector bundle $\Pi$
over $V$,  
then the function
$(z,x,y)\mapsto(\Psi_V(z,x)\mid\Psi_V(z,y))_z$,
$V\times \Ec\times\Ec\to{\mathbb C}$
is smooth.
\end{itemize}

A {\it Hermitian bundle} is a bundle endowed with a Hermitian structure as above.
\end{definition}

\begin{definition}\label{reprokernel}
\normalfont
Let $\Pi\colon D\to Z$ be a Hermitian bundle, 
and $p_1,p_2\colon Z\times Z\to Z$ be the Cartesian projections. 
A {\it reproducing kernel} on~$\Pi$ is a continuous section of the bundle 
$\rm{Hom}(p_2^*\Pi,p_1^*\Pi)\to Z\times Z$ such that the
mappings 
$K(s,t)\colon D_t\to D_s$ ($s,t\in Z$)
are bounded linear operators and such that $K$ is positive definite in the following sense: 
For every $n\ge1$ and $t_j\in Z$,
$\eta_j\in D_{t_j}$ ($j= 1,\dots, n$),
\begin{equation}\label{reprokernel_eq1}
\sum_{j,l=1}^n
\bigl(K(t_l,t_j)\eta_j\mid \eta_l\bigr)_{t_l}\ge0.
\end{equation}

For every $\xi\in D$ we set 
$$
K_\xi:=K(\cdot,\Pi(\xi))\xi\colon Z\to D
$$ 
which is a section of the bundle $\Pi$.  
For $\xi,\eta\in D$, 
the prescriptions 
\begin{equation}\label{reprokernel_eq2}
(K_\xi\mid K_\eta)_{\Hc^K}:=(K(\Pi(\eta),\Pi(\xi))\xi\mid\eta)_{\Pi(\eta)},
\end{equation}
define an inner product 
$(\cdot\mid \cdot)_{\Hc^K}$ on $\hbox{span}\{K_\xi:\xi\in D\}$ 
whose completion gives rise to a Hilbert space denoted by $\Hc^K$, 
which consists of sections of the bundle $\Pi$. 
We also define the mappings 
$$
\begin{aligned}
\widehat{K}& \colon D\to\Hc^K,\quad \widehat{K}(\xi)=K_\xi,\\
\zeta_K & \colon Z\to\Gr(\Hc^K),\quad \zeta_K(s)=\overline{\widehat{K}(D_s)},
\end{aligned}
$$
where $\Gr(\Hc^K)$ is the Grassmann manifold of all closed subspaces of $\Hc^K$ and the bar over 
$\widehat{K}(D_s)$ indicates the topological closure. 
See \cite{BG08} for details. 
\end{definition}

\begin{example}[trivial bundles]\label{ex1}
\normalfont
Let $Z$ be any Banach manifold 
(for instance any open subset of some real Banach space) 
and $\Ec$ be any complex Hilbert space and define the trivial bundle 
$$
\Pi\colon D=Z\times\Ec\to Z,\quad (z,x)\mapsto z.
$$
For every $z\in Z$ we have $D_z=\{z\}\times\Ec$ and
moreover there exists a one-to-one correspondence $\sigma\mapsto f_\sigma$ 
sections $\sigma$ of $\Pi$ and $\Ec$-valued functions $F_\sigma$ on $Z$ 
given by 
$$
(\forall z\in Z)\quad \sigma(z)=(z,F_\sigma(z)).
$$
Denote by $\GL^{+}(\Ec)$ the set of positive invertible operators on $\Ec$, 
which is an open subset of the $C^*$-algebra $\Bc(\Ec)$. 
Then there exists a one-to-one correspondence between the Hermitian structures on $\Pi$ and 
the smooth mappings $h\colon Z\to\GL^{+}(\Ec)$ given by 
$$
(\forall z\in Z)\quad (\cdot\mid\cdot)_z\colon D_z\times D_z\to\CC,
\quad ( (z,x_1)\mid (z,x_2) )_z:=( h(z) x_1\mid x_2)_{\Ec}. 
$$
Also, there exists a one-to-one correspondence between the reproducing kernels $K$ on $\Pi$ and the 
$\Bc(\Ec)$-valued reproducing kernels $\kappa$ on $Z$ (see \cite{Ne00}) by 
$$
(\forall z_1,z_2\in Z)(\forall x\in\Ec)\quad 
K(z_1,z_2)\colon D_{z_2}\to D_{z_1},\quad (z_2,x)\mapsto(z_1,\kappa(z_1,z_2)x).
$$
\end{example}

\begin{example}[homogeneous bundles]\label{ex2}
\normalfont
Let $G_A$ be a Banach-Lie group with a Banach-Lie subgroup $G_B$.
Let 
$\rho_A\colon G_A\to \Bc(\Hc_A)$ and $\rho_B\colon G_B\to \Bc(\Hc_B)$ 
be uniformly continuous unitary representations 
with 
$\Hc_B\subseteq\Hc_A$, $\rho_B(u)=\rho_A(u)|_{\Hc_B}$ for
$u\in G_B$ and
$\Hc_A=\overline{\spann}\rho_A(G_A)\Hc_B$. 

Let us consider the homogeneous vector bundle 
$\Pi_\rho\colon G_A\times_{G_B}\Hc_B\to G_A/G_B$, induced by the representation $\rho_B$.  
Recall that 
$\G_A\times_{G_B}\Hc_B$ is the Cartesian product $\G_A\times\Hc_B$ modulo the equivalence relation 
defined by
$$
(u,h)\sim(u',h')\iff (\exists w\in G_B) \quad u'=uw,\ h'=\rho(w^{-1})h,
$$ 
endowed with its canonical structure of Banach manifold; see \cite{KM97}. 

We provide $\Pi_\rho$ with 
the 
Hermitian structure given by
$$ 
\left([(u,f)],[(u,h)]\right)_s:=(f\mid h)_{\Hc}, \quad u\in G_A,s:=u G_B, f,h\in\Hc_B.
$$
Let $P\colon\Hc_A\to\Hc_B$ be the orthogonal projection. 

We define
the reproducing kernel $K_\rho$ on the homogeneous Hermitian vector bundle
$\Pi_\rho\colon D=G_A\times_{G_B}\Hc_B\to G_A/G_B$ by
\begin{equation}\label{homoker}
K_\rho(uG_B,vG_B)[(v,f)]=[(u,P(\rho_A(u^{-1})\rho_A(v)f))],
\end{equation}
for $uG_B,vG_B\in D$ and $f\in\Hc_B$
(see \cite {BG08}). 

There exists
a unitary operator $W\colon\Hc^{K_\rho}\to\Hc_A$ such that
$W(K_\eta)=\pi_A(v)f$ whenever $\eta=[(v,f)]\in D$; 
see the end of the proof of \cite[Proposition 4.1]{BG08}.
\end{example}

\begin{example}[tautological bundles]\label{ex3}
\normalfont 
In Example~\ref{ex2} assume $G_A=U(\Hc_A)$ with the tautological representation $\rho_A$, 
and 
$$
G_B=\{u\in U(\Hc_A)\mid u(\Hc_B)\subseteq\Hc_B\}\simeq U(\Hc_B)\times U(\Hc_B^\perp).
$$ 
Denote $\Gr_{\Hc_B}(\Hc_A):=\{u(\Hc_B)\mid u\in U(\Hc_A)\}$, 
and 
$$
\Tc_{\Hc_B}(\Hc_A)=\{(u(\Hc_B),x)\mid u\in U(\Hc_A), x\in u\Hc_B\}\subseteq\Gr_{\Hc_B}(\Hc_A)\times\Hc_A.
$$ 
Then the pair of maps 
$$\begin{aligned}
G_A\times_{G_B}\Hc_B\to \Tc_{\Hc_B}(\Hc_A),&\quad [(u,x)]\mapsto(u(\Hc_B),u(x)), \\
G_A/G_B\to\Gr_{\Hc_B}(\Hc_A),&\quad uG_B\mapsto u(\Hc_B)
\end{aligned}$$
defines an isomorphism of the vector bundle $\Pi_\rho\colon G_A\times_{G_B}\Hc_B\to G_A/G_B$ 
onto the \emph{tautological bundle} 
$\Pi_{\Hc_A,\Hc_B}\colon\Tc_{\Hc_B}(\Hc_A)\to\Gr_{\Hc_B}(\Hc_A)$, $(\Sc,x)\mapsto \Sc$.
See \cite{BG09} for some more details. 

Now let $p_1,p_2\colon\Gr_{\Hc_B}(\Hc_A)\times\Gr_{\Hc_B}(\Hc_A)\to\Gr_{\Hc_B}(\Hc_A)$ 
the natural Cartesian projections. 

For any $\Sc\in\Gr_{\Hc_B}(\Hc_A)$ let us also denote by $p_\Sc\colon\Hc_A\to\Sc$ the corresponding orthogonal projection, 
whose adjoint operator is the inclusion map $p_\Sc^*=\iota_\Sc\colon\Sc\hookrightarrow\Hc_A$. 
Then we can define 
$$
\begin{aligned}
Q_{\Hc_A,\Hc_B} & \colon\Gr_{\Hc_B}(\Hc_A)\times\Gr_{\Hc_B}(\Hc_A)\to
 {\rm Hom}(p_2^*(\Pi_{\Hc_A,\Hc_B}),p_1^*(\Pi_{\Hc_A,\Hc_B})),\\
Q_{\Hc_A,\Hc_B}&(\Sc_1,\Sc_2)=p_{\Sc_1}p_{\Sc_2}^*=(p_{\Sc_1})|_{\Sc_2}\colon\Sc_2\to\Sc_1. 
\end{aligned}
$$
The mapping $Q_{\Hc_A,\Hc_B}$ is the {\it universal reproducing kernel} 
corresponding to the Hilbert space $\Hc_A$ and its closed subspace~$\Hc_B$; 
see \cite{BG11}. 
Note that $Q_{\Hc_A,\Hc_B}$ actually depends on $\Gr_{\Hc_B}(\Hc_A)$ and not on~$\Hc_B$. 
\end{example}

\section{Covariant derivatives on Hermitian vector bundles}\label{Sect3}

This section contains the main tools that allow us to describe positivity properties 
of vector bundles. 
These tools are the covariant derivatives and their curvatures. 
As we will see, in the case of holomorphic vector bundles, 
the existence of nontrivial global holomorphic cross-sections entails a positive curvature property. 
The conclusion of all that will be that 
a certain \emph{intrinsic positivity property} is necessary in order that a holomorphic vector bundle 
admits nontrivial reproducing kernels that give rise to Hilbert spaces of holomorphic cross-sections. 

\subsection*{Covariant derivative and curvature}
\begin{definition}\label{def3}
\normalfont
We first define the covariant derivatives on trivial vector bundles. 
So assume $X$ is an open subset of any real Banach space $\Xc$, and let $\Vc$ be another real Banach space. 
A \emph{linear connection form} on the trivial bundle $X\times\Vc\to X$, $(x,v)\mapsto x$, 
is any 1-form $A\in\Omega^1(X,\Bc(\Vc))$. 
The value of $A$ at any point $x\in X$ is denoted by 
$A_x\in\Bc(\Xc,\Bc(\Vc))$ and we are going to use freely the natural topological isomorphisms
$$
\Bc(\Xc,\Bc(\Vc))\simeq\Bc(\Xc,\Vc;\Vc)\simeq \Bc(\Xc\hotimes\Vc,\Vc)
$$
where $\Bc(\Xc,\Vc;\Vc)$ stands for the space of bounded bilinear maps from $\Xc\times\Vc$ into~$\Vc$ 
and $\hotimes$ denotes the projective tensor product of Banach spaces. 

The \emph{covariant derivative} corresponding to the above linear connection form 
is the sequence of linear operators $\nabla\colon\Omega^p(X,\Vc)\to\Omega^{p+1}(X,\Vc)$ 
defined for $p=0,1,2,\dots$ by 
$$\nabla\sigma=\de\sigma+A\wedge\sigma $$
for every $\sigma\in\Omega^p(X,\Vc)$, where the wedge product 
$$\wedge \colon \Omega^1(X,\Bc(\Vc))\times\Omega^p(X,\Vc)\to\Omega^{p+1}(X,\Vc)$$
is defined (see Definition~\ref{def2}) by using the natural bilinear map $\Bc(\Vc)\times\Vc\to\Vc$ given 
by the action of the operators in $\Bc(\Vc)$ on $\Vc$.

If $\Pi\colon D\to Z$ is any (locally trivial) vector bundle, 
then for every $p=0,1,2,\dots$ we define 
$\Hom(\wedge^p\tau_Z,\Pi)$ as the vector bundle over $Z$ whose fiber 
over $z\in Z$ is the space $\Bc(\wedge T_z Z, D_z)$ 
of all bounded skew-symmetric $p$-linear maps $T_zZ\times\cdots\times T_zZ\to D_z$  
(see the Appendix below).  
We denote by $\Omega^p(Z,D)$ the space of all locally defined smooth sections of $\Hom(\wedge^p\tau_Z,\Pi)$.

A \emph{covariant derivative} on the vector bundle $\Pi$  
is any sequence of operators $\nabla\colon\Omega^p(Z,D)\to\Omega^{p+1}(Z,D)$ for 
$p=0,1,2,\dots$ which can be expressed in terms of connection forms as above in any local trivialization of~$\Pi$. 
\end{definition}

\begin{remark}
\normalfont
In Definition~\ref{def3} we have 
$$\begin{aligned}
(\nabla\sigma)_x(x_1,\dots,x_{p+1})
=&(\de_x\sigma)(x_1,\dots,x_{p+1}) \\
&+\sum_{j=1}^{p+1}\underbrace{A_x(x_j)}_{\hskip20pt\in\Bc(\Vc)}
\underbrace{\sigma_x(x_1,\dots,x_{j-1},x_{j+1},\dots,x_{p+1})}_{\hskip15pt\in\Vc}
\end{aligned} $$
for every $\sigma\in\Omega^p(X,\Vc)$, $x\in X$, and $x_1,\dots,x_{p+1}\in\Xc$. 
In particular, if $p=0$, then $\sigma\in\Ci(X,\Vc)$ and 
the 1-form $\nabla_\sigma\in\Omega^1(X,\Vc)$ is given by 
$$
(\nabla\sigma)_x(x_1)=(\de_x\sigma)(x_1)+A_x(x_1)\sigma_x $$
for every $x\in X$ and $x_1\in\Xc$. 
\end{remark}

\begin{definition}\label{def3.5}
\normalfont
Assume the setting of Definition~\ref{def3}. 
The \emph{curvature form} corresponding to the linear connection form $A\in\Omega^1(X,\Bc(\Vc))$ is 
$$\Theta:=\de A+A\wedge A\in\Omega^2(X,\Bc(\Vc))$$
where the wedge product 
$$\wedge \colon \Omega^1(X,\Bc(\Vc))\times\Omega^1(X,\Bc(\Vc))\to\Omega^2(X,\Bc(\Vc))$$
is defined (see Definition~\ref{def2}) via the natural bilinear map $\Bc(\Vc)\times\Bc(\Vc)\to\Bc(\Vc)$ given by the product of operators in $\Bc(\Vc)$.  

If $\Pi\colon D\to Z$ is an arbitrary (locally trivial) vector bundle 
with covariant derivative $\nabla\colon\Omega^p(Z,D)\to\Omega^{p+1}(Z,D)$
defined for $p=0,1,2,\dots$, 
then the curvature forms defined as above in local trivializations 
can be glued together into a global curvature form $\Theta\in\Omega^2(Z,\End(\Pi))$. 
The covariant derivative is said to be \emph{flat} if its curvature is $\Theta=0$. 
\end{definition}

\begin{remark}
\normalfont
In the notation of Definition~\ref{def3.5}, 
for every $z\in Z$ we have the skew-symmetric bilinear map $\Theta_z\colon T_zZ\times T_zZ\to\Bc(D_z)$. 
Moreover, for every $p\ge 0$ and $\sigma\in\Omega^p(Z,D)$ 
we have $\nabla(\nabla\sigma)=\Theta\wedge\sigma$. 
Hence the covariant derivative $\nabla$ is flat if and only if $\nabla^2=0$. 
\end{remark}

\subsection*{Covariant derivatives compatible with Hermitian structures}
\begin{definition}\label{pb2.81}
\normalfont
Let $\Pi\colon D\to Z$ be any Hermitian vector bundle. 
We say that a covariant derivative  
$\nabla$  on $\Pi$ 
is \emph{compatible with the Hermitian structure} if it satisfies the following condition: 
If $W$ is an open subset of $Z$ and $\sigma_1,\sigma_2\colon W\to D$ 
are smooth cross-sections of the bundle $\Pi$, 
then 
$$
\de(\sigma_1\mid\sigma_2)=(\nabla\sigma_1\mid\sigma_2)
+(\sigma_1\mid \nabla\sigma_2)\in\Omega^1(W,\CC) 
$$
that is, we have 
$(\de_z(\sigma_1\mid\sigma_2))(x)
=((\nabla\sigma_1)(x)\mid \sigma_2(z))_z
+(\sigma_1(z)\mid(\nabla\sigma_2)(x))_z$ 
for all $z\in W$ and $x\in T_zZ$.
This condition has a local character, so it suffices to check it in local trivializations of the bundle~$\Pi$. 
\end{definition}

\begin{remark}\label{def5}
\normalfont
Assume that $X$ is an open set in the real Banach space $\Xc$ 
and $\Ec$ is any complex Hilbert space. 
A Hermitian structure on $\Ec$ is then the same thing as a smooth mapping 
$h\colon X\to\GL^{+}(\Ec)$. 
If we define the mapping 
$$
(\cdot\mid\cdot)_x\colon\Ec\times\Ec\to \CC,\quad 
(v_1\mid v_2)_x=(h(x)v_1\mid v_2) 
$$
for every $x\in X$, then we can note the following: 
\begin{enumerate}
\item For every $x\in X$ the mapping $(\cdot\mid\cdot)_x$ is a scalar product 
compatible with the topology of $\Ec$ since $h_x\in\GL^{+}(\Ec)$.  
\item Since the Hermitian structure $h$ is smooth, we can define a natural sesquilinear map 
$$
h(\cdot,\cdot)\colon\Ci(X,\Ec)\times\Ci(X,\Ec)\to\Ci(X,\CC) 
$$
such that $(h(\phi,\psi))_x=(\phi_x\mid \psi_x)_x$ 
for all $\phi,\psi\in\Ci(X,\Ec)$ and $x\in X$. 
\item We can also define a sesquilinear map denoted in the same way, 
$$
h(\cdot,\cdot)\colon\Omega^1(X,\Ec)\times\Ci(X,\Ec)\to\Omega^1(X,\CC) 
$$
such that for all $\sigma\in\Omega^1(X,\Ec)$, $\psi\in\Ci(X,\Ec)$, and $x\in X$ we have 
$$
(h(\sigma,\psi))_x=(\sigma_x(\cdot)\mid \psi_x)_x
=(h_x\sigma_x(\cdot)\mid \psi_x)\colon\Xc\to\CC,
$$ 
where we recall that $\sigma_x\in\Bc(\Xc,\Ec)$. 
Similarly, one defines a sesquilinear map  
$$
h(\cdot,\cdot)\colon\Ci(X,\Ec)\times\Omega^1(X,\Ec)\to\Omega^1(X,\CC) $$
such that 
$$
\begin{aligned}
(h(\psi,\sigma))_x(v)
&=\overline{h(\sigma,\psi)_x(y)}
=\overline{(\sigma_x(y)\mid\psi_x)_x}
=\overline{(h_x\sigma_x(y)\mid\psi_x)} \\
&=(\psi_x\mid h_x\sigma_x(y))
\end{aligned}
$$ 
for all $\sigma\in\Omega^1(X,\Ec)$, $\psi\in\Ci(X,\Ec)$, 
$y\in\Xc$, and $x\in X$. 
\end{enumerate}
\end{remark}

The following result is suggested by the classical situation of finite-dimensional bundles; 
see for instance the computations prior to 
\cite[Ch. III, Prop. 1.11]{We08}. 

\begin{proposition}\label{hermitian}
In the setting of Remark~\ref{def5}, assume that a linear connection form $A\in\Omega^1(X,\Bc(\Ec))$ is also given. 
Then the following assertions are equivalent: 
\begin{enumerate}
\item\label{hermitian_item1} 
We have 
$$
\de(h(\phi,\psi))=h(\nabla\phi,\psi)+h(\phi,\nabla\psi) 
$$
for all $\phi,\psi\in\Ci(X,\Ec)$. 
\item\label{hermitian_item2} 
The equation 
$$
\de_x h=h_xA_x(\cdot)+A_x(\cdot)^*h_x\in\Bc(\Xc,\Bc(\Ec))
$$
is satisfied for every $x\in X$. 
\end{enumerate}
\end{proposition}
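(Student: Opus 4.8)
The plan is to prove the equivalence by a direct local computation, using the product rule for the Fréchet derivative, the two sesquilinear pairings $h(\cdot,\cdot)$ recorded in Remark~\ref{def5}, and the fact that each $h_x\in\GL^{+}(\Ec)$ is self-adjoint.

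First I would expand both sides of the identity in assertion \ref{hermitian_item1} pointwise. Fix $x\in X$ and a tangent vector $v\in\Xc$. Differentiating $x\mapsto(h(x)\phi(x)\mid\psi(x))$ gives
\[
\de_x(h(\phi,\psi))(v)=((\de_x h)(v)\phi_x\mid\psi_x)+(h_x(\de_x\phi)(v)\mid\psi_x)+(h_x\phi_x\mid(\de_x\psi)(v)),
\]
where we used that $h_x^*=h_x$ to rewrite $(h_x\phi_x\mid(\de_x\psi)(v))$ symmetrically if needed. On the other side, unfolding $\nabla\phi=\de\phi+A\wedge\phi$ from Definition~\ref{def3} together with the conventions for $h(\cdot,\cdot)$ in Remark~\ref{def5} yields $h(\nabla\phi,\psi)_x(v)=(h_x(\de_x\phi)(v)\mid\psi_x)+(h_xA_x(v)\phi_x\mid\psi_x)$ and $h(\phi,\nabla\psi)_x(v)=(\phi_x\mid h_x(\de_x\psi)(v))+(\phi_x\mid h_xA_x(v)\psi_x)$. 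Since $h_x$ is self-adjoint, $(\phi_x\mid h_x(\de_x\psi)(v))=(h_x\phi_x\mid(\de_x\psi)(v))$, so the two ``plain derivative'' contributions cancel against the last two terms of the expanded left-hand side, and assertion \ref{hermitian_item1} becomes equivalent to the identity $((\de_x h)(v)\phi_x\mid\psi_x)=(h_xA_x(v)\phi_x\mid\psi_x)+(\phi_x\mid h_xA_x(v)\psi_x)$ holding for all $x\in X$, $v\in\Xc$, and all values $\phi_x,\psi_x\in\Ec$ (it suffices to test with constant sections for one implication and to read the equality backwards for the other).

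Next I would use $h_x^*=h_x$ once more to write $(\phi_x\mid h_xA_x(v)\psi_x)=(A_x(v)^*h_x\phi_x\mid\psi_x)$, so that the last displayed identity reads $((\de_x h)(v)\phi_x\mid\psi_x)=((h_xA_x(v)+A_x(v)^*h_x)\phi_x\mid\psi_x)$ for all $\phi_x,\psi_x$. By nondegeneracy of the scalar product of $\Ec$ (a bounded operator is determined by its associated sesquilinear form), this is equivalent to $(\de_x h)(v)=h_xA_x(v)+A_x(v)^*h_x$ for all $v\in\Xc$, that is, to $\de_x h=h_xA_x(\cdot)+A_x(\cdot)^*h_x$ in $\Bc(\Xc,\Bc(\Ec))$ for every $x\in X$, which is assertion \ref{hermitian_item2}.

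I do not expect a serious obstacle here; the argument is essentially bookkeeping. The only point requiring care is to keep track of the two distinct sesquilinear pairings $h(\cdot,\cdot)$ from Remark~\ref{def5} — the one linear in the form-slot and the one antilinear in it, with the accompanying complex conjugation — so that the ``plain derivative'' terms on the two sides of assertion \ref{hermitian_item1} cancel correctly; once this is set up, self-adjointness of $h_x$ and nondegeneracy of the inner product do the rest.
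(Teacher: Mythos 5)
Your argument is correct and follows essentially the same route as the paper: expand $\de_x(h(\phi,\psi))$ by the product rule for the trilinear map $(T,v,w)\mapsto(Tv\mid w)$, use $\nabla=\de+A$ and $h_x^*=h_x$ to cancel the plain derivative terms, and then pass from the resulting sesquilinear-form identity to the operator identity $\de_x h=h_xA_x(\cdot)+A_x(\cdot)^*h_x$ by nondegeneracy (the paper leaves the last step as "it follows at once", while you make the pointwise testing with constant sections explicit). No gap.
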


\begin{proof} 
Since the mapping $\Bc(\Ec)\times\Ec\times\Ec\to \CC$, 
$(T,v,w)\mapsto( Tv\mid w)$ is trilinear and continuous, 
it follows by the product rule of differentiation (see for instance \cite[Th. 1, Ch. 1]{Nl69}) 
that for all $\phi,\psi\in\Ci(X,\Ec)$ and $x\in X$ we have the following equalities in $\Bc(\Xc,\CC)$: 
$$
\de_x(h(\phi,\psi))
=\de_x (h\phi\mid \psi)
=(\de_x h(\cdot)\phi_x\mid \psi_x)
+( h_x\de_x\phi(\cdot)\mid \psi_x)
+( h_x\phi_x\mid \de_x\psi(\cdot)),
$$
hence by using the fact that $h_x^*=h_x$ in $\Bc(\Ec)$ we get 
$$
\de_x( h(\phi,\psi))=(\de_x h(\cdot)\phi_x\mid \psi_x)
+(h(\de\phi,\psi))_x+(h(\phi,\de\psi))_x.
$$
Since $\nabla=\de+A$, we get further 
$$
\begin{aligned}
\de_x &((h(\phi,\psi)))-(h(\nabla\phi,\psi))_x-(h(\phi,\nabla\psi))_x \\
& =(\de_x h(\cdot)\phi_x\mid \psi_x)
-(h(A\wedge\phi,\psi))_x+(h(\phi,A\wedge\psi))_x  \\
& =(\de_x h(\cdot)\phi_x\mid \psi_x)
-( h_xA_x(\cdot)\phi_x\mid \psi_x)
-( h_x\phi_x\mid A_x(\cdot)\psi_x)\\
& =(\de_x h(\cdot)\phi_x
- h_x A_x(\cdot)\phi_x
- A_x(\cdot)^*h_x\phi_x\mid \psi_x).
\end{aligned} 
$$
With this equality at hand, it follows at once that the assertions in the statement are equivalent to each other. 
\end{proof}

\subsection*{Linear connections compatible with complex structures}
\begin{definition}\label{def4}
\normalfont
Assume $X$ is any open subset of some complex Banach space $\Xc$ and $\Ec$ is another complex Banach space. 
Let $A\in\Omega^1(X,\Bc(\Ec))$ be any connection form, 
hence $A\colon TX=X\times\Xc\to\Bc(\Ec)$ is smooth and $\RR$-linear in the second variable. 
Since both $\Xc$ and $\Ec$ are complex vector spaces, 
we can use the direct sum decomposition~\eqref{dbar_eq1} 
to define the linear operators 
$$
\nabla'\colon \Ci(X,\Ec)\to\Omega^{(1,0)}(X,\Ec)
\text{ and }
\nabla''\colon \Ci(X,\Ec)\to\Omega^{(0,1)}(X,\Ec)
$$
such that 
$$
\nabla=\nabla'+\nabla''
$$
where $\nabla\colon\Ci(X,\Ec)\to\Omega^1(X,\Ec)$ is the covariant derivative corresponding to~$A$. 
So for every $\sigma\in\Ci(X,\Ec)$ and $x\in X$ we have  
$(\nabla\sigma)(x)=(\nabla'\sigma)(x)+(\nabla''\sigma)(x)$, 
the unique decomposition 
for which the operator $(\nabla'\sigma)(x)\colon\Xc\to\Bc(\Ec)$ is $\CC$-linear while  
$(\nabla''\sigma)(x)\colon\Xc\to\Bc(\Ec)$ is conjugate linear. 
\end{definition}
The following result is suggested by the beginning remark in the proof of \cite[Ch. III, Th. 2.1]{We08}. 

\begin{proposition}\label{complex}
In the setting of Definition~\ref{def4}, the following assertions are equivalent:
\begin{enumerate}
 \item\label{complex_item1} 
For every $\sigma\in\Oc(X,\Ec)$ we have $\nabla''\sigma=0$. 
 \item\label{complex_item2}  
We have $A\in\Omega^{(1,0)}(X,\Bc(\Ec))$. 
\end{enumerate}
\end{proposition}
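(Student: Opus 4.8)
The plan is to work entirely in a fixed local trivialization, where everything reduces to a concrete computation with the connection form $A$ and the Dolbeault splitting $\de=\partial+\overline\partial$. First I would recall from Definition~\ref{def4} that for $\sigma\in\Ci(X,\Ec)$ we have $\nabla\sigma=\de\sigma+A\wedge\sigma=\partial\sigma+\overline\partial\sigma+A\wedge\sigma$, and that $\partial\sigma\in\Omega^{(1,0)}(X,\Ec)$ while $\overline\partial\sigma\in\Omega^{(0,1)}(X,\Ec)$ by the very definition of the Dolbeault operators. Writing $A=A^{(1,0)}+A^{(0,1)}$ according to the decomposition~\eqref{dbar_eq1} of $\Bc(\Ec)$-valued $1$-forms, the wedge product splits as $A\wedge\sigma=A^{(1,0)}\wedge\sigma+A^{(0,1)}\wedge\sigma$ with the two summands of types $(1,0)$ and $(0,1)$ respectively (this uses that wedging against a $0$-form, i.e. the pointwise action of $\Bc(\Ec)$ on $\Ec$, preserves the bidegree). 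Hence the unique type decomposition of $\nabla\sigma$ reads
$$
\nabla'\sigma=\partial\sigma+A^{(1,0)}\wedge\sigma,\qquad
\nabla''\sigma=\overline\partial\sigma+A^{(0,1)}\wedge\sigma .
$$

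Next I would bring in holomorphy. If $\sigma\in\Oc(X,\Ec)$, then $\overline\partial\sigma=0$ by definition of a holomorphic $\Ec$-valued function, so for such $\sigma$ we have simply $\nabla''\sigma=A^{(0,1)}\wedge\sigma=A^{(0,1)}(\cdot)\sigma$, the pointwise action of the $(0,1)$-part of $A$ on $\sigma$. This already shows $(\ref{complex_item2})\Rightarrow(\ref{complex_item1})$: if $A\in\Omega^{(1,0)}(X,\Bc(\Ec))$ then $A^{(0,1)}=0$, whence $\nabla''\sigma=0$ for every holomorphic $\sigma$.

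For the converse $(\ref{complex_item1})\Rightarrow(\ref{complex_item2})$ I would argue pointwise, exploiting that holomorphic sections are locally abundant: given any $x_0\in X$ and any $e\in\Ec$, the constant function $\sigma\equiv e$ is holomorphic on $X$, so the hypothesis gives $0=\nabla''\sigma(x_0)=A^{(0,1)}_{x_0}(\cdot)e$ as an element of $\Omega^{(0,1)}$ at $x_0$, i.e. $A^{(0,1)}_{x_0}(v)e=0$ for every $v\in\Xc$. Since $e\in\Ec$ was arbitrary, $A^{(0,1)}_{x_0}(v)=0$ in $\Bc(\Ec)$ for all $v$, and since $x_0$ was arbitrary, $A^{(0,1)}=0$, that is $A\in\Omega^{(1,0)}(X,\Bc(\Ec))$.

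The only genuinely delicate point — the main obstacle — is the behaviour of the Dolbeault decomposition under the wedge product, namely the assertion that $A^{(1,0)}\wedge\sigma$ has type $(1,0)$ and $A^{(0,1)}\wedge\sigma$ has type $(0,1)$ when $\sigma$ is an $\Ec$-valued $0$-form, and more generally that the bidegree is additive under $\wedge$. In the infinite-dimensional Banach setting this requires checking that the pointwise bilinear action $\Bc(\Ec)\times\Ec\to\Ec$ is $\CC$-bilinear (which it is) so that it intertwines the complex-linear/conjugate-linear decompositions of the factors; this is exactly the kind of routine but necessary verification that belongs in the Appendix on vector-valued forms, and I would simply cite Definition~\ref{def2} and the remarks there rather than redo it. Everything else is the bookkeeping displayed above.
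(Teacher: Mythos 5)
Your argument is correct and is essentially the paper's own proof: decompose $A=A^{(1,0)}+A^{(0,1)}$, observe that for holomorphic $\sigma$ one has $\nabla''\sigma=A^{(0,1)}\wedge\sigma$ (the paper justifies the type splitting via Remark~\ref{compl}\eqref{compl_item1}, exactly the Appendix fact you invoke), and then test against constant sections to force $A^{(0,1)}=0$. No changes needed.
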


\begin{proof}
First note that the 1-form $A$ takes values in the complex vector space $\Bc(\Ec)$, hence 
we get a decomposition $A=A^{(1,0)}+A^{(0,1)}$ (see the Appendix section), 
and then condition~\eqref{complex_item2} is equivalent to $A^{(0,1)}=0$. 

If $\sigma\in\Oc(X,\Ec)$, then 
$$
\nabla\sigma=\de\sigma+A\wedge\sigma=\partial\sigma+\bar\partial\sigma+A\wedge \sigma
=\partial\sigma+A\wedge \sigma
$$
hence, by Remark~\ref{compl}\eqref{compl_item1},
$$
\nabla'\sigma=\partial\sigma+A^{(1,0)}\wedge\sigma
$$
and 
$$
\nabla''\sigma=A^{(0,1)}\wedge\sigma.
$$
By considering constant $\Ec$-valued functions on $X$, 
we see that $\Ec$ is generated by the values of functions in $\Oc(X,\Ec)$. 
Then the above equality implies that Assertion~\eqref{complex_item1} is equivalent to $A^{(0,1)}=0$, 
and this concludes the proof. 
\end{proof}

\begin{definition}\label{compl_compat}
\normalfont
If the assertions in Proposition~\ref{complex} are satisfied, 
then we say that the linear connection corresponding to $A$ is 
compatible with the complex structures of $X$ and~$\Ec$. 
More generally, a linear connection on a holomorphic Banach vector bundle is 
\emph{compatible with the complex structure} if its local connection form in any local holomorphic trivialization is compatible (in the above sense) with the complex structures of the base and the fiber. 
It is easily seen that this property has a local character and does not depend on the choice of a local holomorphic trivialization. 
\end{definition}

\subsection*{Chern covariant derivatives}
\begin{definition}
\normalfont
A \emph{Hermitian holomorphic vector bundle} is any holomorphic vector bundle $\Pi\colon D\to Z$ 
endowed with a Hermitian structure. 
In this framework, a \emph{Chern covariant derivative} on $\Pi$ 
is any covariant derivative which is compatible  
both with the complex structure and with the Hermitian structure of the vector bundle~$\Pi$. 
\end{definition}

We are now able to prove an infinite-dimensional version of \cite[Ch. III, Th. 2.1]{We08} for trivial bundles. 

\begin{lemma}\label{chern}
Let $X$ be any open subset of some complex Banach space $\Xc$, $\Ec$ be any complex Hilbert space, and $h\colon X\to\GL^{+}(\Ec)$ be any smooth mapping. 
Then there exists a unique connection form $A\in\Omega^1(X,\Bc(\Ec))$ 
that is compatible both with the Hermitian structure given by $h$ 
and with the complex structures of $X$ and $\Ec$, and it is given by 
\begin{equation}\label{chern_eq1}
A_x=h_x^{-1}(\partial h)_x 
\end{equation}
for every $x\in X$. 
\end{lemma}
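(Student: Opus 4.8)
The plan is to combine the two characterizations already established in Propositions~\ref{hermitian} and~\ref{complex}, reducing the lemma to a short linear-algebra computation. Compatibility with the complex structures of $X$ and $\Ec$ means, by Proposition~\ref{complex}, that $A\in\Omega^{(1,0)}(X,\Bc(\Ec))$, i.e.\ the value $A_x\colon\Xc\to\Bc(\Ec)$ is $\CC$-linear for every $x\in X$; equivalently $A^{(0,1)}=0$. Compatibility with the Hermitian structure given by $h$ is, by Proposition~\ref{hermitian}\eqref{hermitian_item2}, the identity
$$
\de_x h = h_x A_x(\cdot) + A_x(\cdot)^* h_x \in \Bc(\Xc,\Bc(\Ec))
$$
for every $x\in X$. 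So the task is to show that this operator equation, together with the $\CC$-linearity constraint on $A_x$, has exactly one solution, namely $A_x = h_x^{-1}(\partial h)_x$.

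First I would split $\de_x h$ into its $(1,0)$ and $(0,1)$ parts, $\de_x h = (\partial h)_x + (\bar\partial h)_x$, where $(\partial h)_x\colon\Xc\to\Bc(\Ec)$ is $\CC$-linear and $(\bar\partial h)_x$ is conjugate linear. Since $h$ takes values in self-adjoint operators, differentiating the relation $h_x^* = h_x$ shows that $(\de_x h(v))^* = \de_x h(v)$ for every $v\in\Xc$, and taking adjoints interchanges the $\CC$-linear and conjugate-linear parts, so $(\bar\partial h)_x(v) = ((\partial h)_x(v))^*$. Now in the right-hand side of the Hermitian-compatibility equation, because $A_x$ is $\CC$-linear the term $h_x A_x(\cdot)$ is $\CC$-linear in the argument from $\Xc$, while $A_x(\cdot)^* h_x$ is conjugate linear (conjugation of a $\CC$-linear map). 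Matching the $\CC$-linear parts on both sides gives $(\partial h)_x(v) = h_x A_x(v)$, hence $A_x(v) = h_x^{-1}(\partial h)_x(v)$; matching the conjugate-linear parts gives $(\bar\partial h)_x(v) = A_x(v)^* h_x$, which is automatically satisfied by the formula just obtained because of the self-adjointness identity $(\bar\partial h)_x(v) = ((\partial h)_x(v))^*$ and $h_x^* = h_x$. This proves existence. Uniqueness is immediate from the same decomposition: any $A$ meeting both conditions must have $A_x(v) = h_x^{-1}(\partial h)_x(v)$.

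The remaining point is to check that the candidate $A_x = h_x^{-1}(\partial h)_x$ is genuinely a smooth connection form, i.e.\ an element of $\Omega^1(X,\Bc(\Ec))$: this follows because $h$ is smooth with values in $\GL^{+}(\Ec)\subseteq\GL(\Ec)$, inversion is smooth on $\GL(\Ec)$, $\partial h\in\Omega^{(1,0)}(X,\Bc(\Ec))\subseteq\Omega^1(X,\Bc(\Ec))$, and the pointwise product in $\Bc(\Ec)$ is continuous bilinear; one also notes $A\in\Omega^{(1,0)}$ by construction, so Proposition~\ref{complex} applies. I expect the only mildly delicate step to be the bookkeeping of $\CC$-linear versus conjugate-linear components together with the adjoint identity coming from $h^* = h$ — that is where the infinite-dimensional operator-valued setting differs notationally from the scalar finite-dimensional computation in \cite[Ch. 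III, Th. 2.1]{We08}, though no genuinely new difficulty arises since everything reduces to the equivalences in the two preceding propositions.
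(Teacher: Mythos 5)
Your proof is correct and follows essentially the same route as the paper: both reduce the lemma to Propositions~\ref{hermitian}\eqref{hermitian_item2} and~\ref{complex}, split $\de h=\partial h+\bar\partial h$, and match the $\CC$-linear and conjugate-linear parts of $\de_x h=h_xA_x+A_x^*h_x$ to get \eqref{chern_eq1} and uniqueness. Your explicit verification that $(\bar\partial h)_x(v)=((\partial h)_x(v))^*$ (from $h_x^*=h_x$), which makes the existence check transparent, is a detail the paper's proof leaves implicit, but it is not a different method.
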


\begin{proof}
We first prove the uniqueness assertion. 
If $A$ is a connection form that satisfies the compatibility conditions mentioned in the statement, then by Proposition~\ref{hermitian}\eqref{hermitian_item2} we obtain 
$$
(\partial h)_x+(\bar\partial h)_x=\de_x h=h_xA_x+A_x^*h_x
$$
for every $x\in X$. 
On the other hand $A\in\Omega^{(1,0)}(X,\Bc(\Ec))$ by Proposition~\ref{complex}\eqref{complex_item2}, 
hence the above equation is equivalent to 
\begin{equation}\label{chern_proof_eq1}
(\partial h)_x=h_xA_x\text{ and }(\bar\partial h)_x=A_x^*h_x.
\end{equation}
The first of these equations is clearly equivalent to~\eqref{chern_eq1}. 

To prove the existence, just note that the connection form defined by \eqref{chern_eq1} 
belongs to $\Omega^{(1,0)}(X,\Bc(\Ec))$, hence it is compatible with the complex structures 
by Proposition~\ref{complex}. 
On the other hand, if we define $A$ by the formula in the statement 
and we use the above formulas we see that Assertion~\eqref{complex_item2} of Proposition~\ref{complex} 
holds true, and then by that proposition we see that the covariant derivative corresponding to the connection 
form $A$ is compatible with 
the Hermitian structure as well. 
\end{proof}

Before we go further, let us establish the infinite-dimensional version of \cite[Ch. III, Prop. 2.2]{We08}. 

\begin{proposition}\label{curvature}
Assume the setting of Lemma~\ref{chern} and let $\Theta\in\Omega^2(X,\Bc(\Ec))$ be 
the curvature form corresponding to~$A$. 
Then the following assertions hold: 
\begin{enumerate}
 \item\label{curvature_item1} We have $A\in\Omega^{(1,0)}(X,\Bc(\Ec))$ and $\partial A=-A\wedge A$. 
\item\label{curvature_item2} We have $\Theta=\bar\partial A\in\Omega^{(1,1)}(X,\Bc(\Ec))$. 
\end{enumerate}
\end{proposition}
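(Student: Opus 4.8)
The plan is to reduce both parts to the single identity $\partial A=-A\wedge A$ by splitting every form into its pure types, and then to verify that identity from the explicit formula $A_x=h_x^{-1}(\partial h)_x$ of Lemma~\ref{chern}. That $A\in\Omega^{(1,0)}(X,\Bc(\Ec))$ has in fact already been observed in the proof of Lemma~\ref{chern}, since \eqref{chern_eq1} exhibits $A$ as the product of the smooth $\GL^{+}(\Ec)$-valued function $h^{-1}$ with the $(1,0)$-form $\partial h$; this gives the first half of assertion~\eqref{curvature_item1}. Consequently $A\wedge A$, being the wedge of two $(1,0)$-forms formed by means of the operator product $\Bc(\Ec)\times\Bc(\Ec)\to\Bc(\Ec)$, is of type $(2,0)$, while $\de A=\partial A+\bar\partial A$ with $\partial A$ of type $(2,0)$ and $\bar\partial A$ of type $(1,1)$. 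Thus the curvature form decomposes as
$$
\Theta=\de A+A\wedge A=(\partial A+A\wedge A)+\bar\partial A,
$$
with the first summand of type $(2,0)$ and the second of type $(1,1)$. Hence the remaining part of~\eqref{curvature_item1} is precisely the vanishing of the $(2,0)$-component of $\Theta$, and once this is established we obtain $\Theta=\bar\partial A\in\Omega^{(1,1)}(X,\Bc(\Ec))$, which is assertion~\eqref{curvature_item2}.

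To prove $\partial A=-A\wedge A$ I would differentiate $A=h^{-1}\,\partial h$ using the Leibniz rule for $\partial$ with respect to the operator product, which is the pure-type restriction of the Leibniz rule for $\de$ recorded in the Appendix, obtaining $\partial A=(\partial h^{-1})\wedge\partial h+h^{-1}\,\partial(\partial h)$. Here $\partial(\partial h)=0$, since $\partial^{2}=0$ on $\Bc(\Ec)$-valued forms (a bidegree consequence of $\de^{2}=0$). Differentiating the identity $h\,h^{-1}=\1$ and retaining $(1,0)$-parts yields $\partial h^{-1}=-h^{-1}(\partial h)h^{-1}=-A\,h^{-1}$, and therefore, using associativity of the operator product inside the wedge,
$$
\partial A=(-A\,h^{-1})\wedge\partial h=-A\wedge(h^{-1}\,\partial h)=-A\wedge A.
$$
Substituting this into the decomposition of $\Theta$ above gives $\Theta=\bar\partial A$, and both assertions follow.

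The argument is entirely formal once the basic calculus of $\Bc(\Ec)$-valued differential forms on an open subset of a complex Banach space is available, so I do not anticipate a genuine obstacle; the only points deserving a word of comment are that $\partial^{2}=0$ and that the Leibniz rule for $\de$ passes both to pure types and to the bilinear-map-twisted wedge product, all of which are covered in the Appendix. The mild care required is simply to keep track of bidegrees and of the noncommutativity of the operator product appearing in the wedge, exactly as in the finite-dimensional computation of \cite[Ch.~III, Prop.~2.2]{We08}.
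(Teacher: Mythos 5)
Your proposal is correct and follows essentially the same route as the paper: both establish $\partial(h^{-1})=-h^{-1}(\partial h)h^{-1}$ from $h\,h^{-1}=\1$, apply the Leibniz rule and $\partial^2=0$ to $A=h^{-1}\partial h$ to get $\partial A=-A\wedge A$, and then use $A\in\Omega^{(1,0)}(X,\Bc(\Ec))$ to conclude $\Theta=\de A+A\wedge A=\bar\partial A\in\Omega^{(1,1)}(X,\Bc(\Ec))$. Your explicit bookkeeping of the $(2,0)$ and $(1,1)$ components is only a slightly more detailed presentation of the paper's final display, not a different argument.
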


\begin{proof}
We have $h\cdot h^{-1}=\1$, hence 
$\partial h\cdot h^{-1}+h\partial(h^{-1})=0$. 
Therefore 
$$
\partial(h^{-1})=-h^{-1}\cdot\partial h\cdot h^{-1}
$$ 
and now by \eqref{chern_eq1} we get 
$$\partial A=\partial (h^{-1}\cdot \partial h)=\partial(h^{-1})\wedge \partial h+h^{-1}\cdot\partial^2 h
=-h^{-1}\cdot\partial h\cdot h^{-1}\wedge\partial h =-A\wedge A$$
We have already seen in Proposition~\ref{complex} that $A\in\Omega^{(1,0)}(X,\Bc(\Ec))$. 
Hence  
$$
\Theta=\de A+A\wedge A=\bar\partial A+\partial A+A\wedge A=\bar\partial A\in\Omega^{(1,1)}(X,\Bc(\Ec))
$$
and this concludes the proof. 
\end{proof}

\begin{remark}
\normalfont 
As above, let $X$ be any open subset of the complex Banach space $\Xc$ and $\Ec$ be any complex Hilbert space. 
In Proposition~\ref{curvature}\eqref{curvature_item2}, 
recall that the curvature property $\Theta\in\Omega^{(1,1)}(X,\Bc(\Ec))$ means that for every $x\in X$ 
the map $\Theta_z\colon \Xc\times\Xc\to\Bc(\Ec)$ is sesquilinear 
(more precisely, is $\CC$-linear in the first variable and conjugate linear in the second). 
\end{remark}

\begin{theorem}\label{main_lin}
Each Hermitian holomorphic vector bundle has a unique Chern covariant derivative. 
\end{theorem}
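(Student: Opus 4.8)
The plan is to globalize Lemma~\ref{chern} by a standard patching argument, exploiting the fact that both compatibility conditions have local character and are independent of the chosen trivialization.

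First I would reduce everything to the model situation of Lemma~\ref{chern}. Let $\Pi\colon D\to Z$ be a Hermitian holomorphic vector bundle and cover $Z$ by open sets $V_i$ over which $\Pi$ admits a holomorphic trivialization $\Psi_i\colon V_i\times\Ec_i\to\Pi^{-1}(V_i)$ and which are moreover biholomorphic to open subsets $X_i$ of complex Banach spaces $\Xc_i$. Transporting the Hermitian structure through $\Psi_i$ and the holomorphic chart, Definition~\ref{like}(b) shows that over $V_i$ it is encoded by a smooth map $h_i\colon X_i\to\GL^{+}(\Ec_i)$ exactly as in Remark~\ref{def5}. Lemma~\ref{chern} then produces over $V_i$ a unique connection form $A_i$, hence a covariant derivative $\nabla^{(i)}$ on $\Pi|_{V_i}$, which is compatible both with the Hermitian and with the complex structure of the bundle.

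Next I would check that these local objects glue. On $V_i\cap V_j$ the operators $\nabla^{(i)}$ and $\nabla^{(j)}$ are covariant derivatives on $\Pi|_{V_i\cap V_j}$ which, since compatibility with the Hermitian structure (Definition~\ref{pb2.81}) and with the complex structure (Definition~\ref{compl_compat}) are local and trivialization-independent, are both compatible with the two structures. Reading each of them in the single holomorphic trivialization $\Psi_i$ over $V_i\cap V_j$, each is given by a connection form satisfying the hypotheses of Lemma~\ref{chern}; by the uniqueness part of that lemma the two connection forms coincide, so $\nabla^{(i)}$ and $\nabla^{(j)}$ agree on $\Omega^p(V_i\cap V_j,D)$ for every $p$. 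Hence the family $\{\nabla^{(i)}\}$ glues to operators $\nabla\colon\Omega^p(Z,D)\to\Omega^{p+1}(Z,D)$. That $\nabla$ is a covariant derivative in the sense of Definition~\ref{def3}, i.e. is expressible through connection forms in \emph{every} local trivialization (not merely the holomorphic ones), follows from the usual transformation rule $A\mapsto g^{-1}Ag+g^{-1}\de g$ under a change of trivialization by $g$, since holomorphic trivializations already cover $Z$. By construction $\nabla$ is compatible with both structures, so it is a Chern covariant derivative; and uniqueness is immediate, for any two Chern covariant derivatives are compatible with both structures over each holomorphic trivializing open and hence coincide there by Lemma~\ref{chern}.

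The main obstacle is the overlap computation, but because Lemma~\ref{chern} already carries the uniqueness in the local model, it collapses to an application of that uniqueness rather than an explicit manipulation of the cocycle identities relating the $h_i$ to the holomorphic transition functions. The only genuinely computational point, which I would relegate to a short remark, is that a holomorphic change of trivialization sends $\Omega^{(1,0)}$-forms to $\Omega^{(1,0)}$-forms (because $\bar\partial g=0$), so that the patched connection keeps satisfying $A\in\Omega^{(1,0)}(X,\Bc(\Ec))$ in each new holomorphic trivialization, consistently with Proposition~\ref{complex} and Proposition~\ref{curvature}.
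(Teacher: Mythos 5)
Your proposal is correct and follows essentially the same route as the paper, whose own proof is only a sketch stating that existence for trivial bundles and uniqueness in general follow from Lemma~\ref{chern} via a family of local holomorphic trivializations (with full details deferred to a forthcoming paper). Your write-up simply carries out that patching argument explicitly, using the local character and trivialization-independence of the two compatibility conditions together with the uniqueness in Lemma~\ref{chern} to glue on overlaps.
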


\begin{proof}
The existence in the case of the trivial bundles, 
as well as the uniqueness in the general case 
follow by Lemma~\ref{chern}, by using a family of local holomorphic trivializations. 
See for instance \cite{We08} or \cite{De12} for the proof of the existence 
in the classical situation of finite-dimensional vector bundles. 
The full details of the proof in the general case will be included in a forthcoming paper. 
\end{proof}

\begin{example}
\normalfont
In Example~\ref{ex3},  
the bundle $\Pi_{\Hc_A,\Hc_B}\colon\Tc_{\Hc_B}(\Hc_A)\to\Gr_{\Hc_B}(\Hc_A)$ 
is a Hermitian holomorphic vector bundle, 
hence it carries a unique Chern covariant derivative $\nabla_{\Hc_A,\Hc_B}$ by Theorem~\ref{main_lin}. 
See for instance \cite[Ch. III, Ex. 2.4]{We08} for more details 
on that covariant derivative in the case when $\dim\Hc_A<\infty$.
\end{example}

\begin{problem}\label{forth}
\normalfont
It would be interesting to establish a version of the Koszul-Malgrange integrability theorem of \cite{KM58}   
(see also \cite[Th. 5.1]{AHS78}) for Banach vector bundles (with infinite-dimensional base). 
Some results in this direction were recently obtained in \cite{DP12} and \cite{Ne13}. 
\end{problem}

\subsection*{Some computations of Chern covariant derivatives}
In the following proposition we denote by $\Sg_2(\Hc_1,\Hc_2)$ the complex Hilbert space 
consisting of the Hilbert-Schmidt operators from any complex Hilbert space $\Hc_1$ 
into another complex Hilbert space $\Hc_2$, with the usual scalar product on $\Sg_2(\Hc_1,\Hc_2)$ 
defined in terms of the operator trace. 
If $\Pi_j\colon D_j\to Z$ is any Hermitian vector bundle for $j=1,2$, 
then $\Sg_2(\Pi_1,\Pi_2)\colon D\to Z$ denotes the Hermitian vector bundle 
whose fiber over any $z\in Z$ is the space of Hilbert-Schmidt operators 
$\Sg_2(\Pi_1^{-1}(z),\Pi_2^{-1}(z))$. 
If $\Ec_j$ is the typical fiber of $\Pi_j$ for $j=1,2$ and 
$\Phi_j\colon V\to U(\Ec_j)$ gives a local change of coordinates in $\Pi_j$ over some open set $V\subseteq Z$, 
then $\Phi\colon V\to U(\Sg_2(\Ec_1,\Ec_2))$, $\Phi(z)T=\Phi_2(z)T\Phi_1(z)^{-1}$ 
for $z\in V$ and $T\in\Sg_2(\Ec_1,\Ec_2)$ 
gives a local change of coordinates in $\Sg_2(\Pi_1,\Pi_2)$. 

\begin{proposition}\label{HS}
Let $\Pi_j\colon D_j\to Z$ be any Hermitian holomorphic vector bundle 
with the Chern covariant derivative $\nabla_j$ for $j=1,2$. 
Then $\Sg_2(\Pi_1,\Pi_2)$ is a Hermitian holomorphic vector bundle 
with the Chern covariant derivative satisfying 
$\nabla\Gamma=\nabla_2\Gamma-\Gamma\nabla_1$ for every $\Gamma\in\Omega^0(Z,\Sg_2(\Pi_1,\Pi_2))$. 
\end{proposition}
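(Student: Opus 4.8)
The plan is to construct directly a covariant derivative on $\Sg_2(\Pi_1,\Pi_2)$ that is compatible with both a natural holomorphic structure and the given Hermitian structure, and then to invoke the uniqueness part of Theorem~\ref{main_lin}. First I would record the two structures. Over any open set $V\subseteq Z$ on which $\Pi_1$ and $\Pi_2$ are simultaneously holomorphically trivial, with holomorphic transition cocycles $g_j\colon V\cap V'\to\GL(\Ec_j)$, the bundle $\Sg_2(\Pi_1,\Pi_2)$ is trivialized over $V$ with typical fiber $\Sg_2(\Ec_1,\Ec_2)$ and transition maps $T\mapsto g_2Tg_1^{-1}$; these are holomorphic and take values in the group of bounded invertible operators on $\Sg_2(\Ec_1,\Ec_2)$ (left and right multiplication by bounded operators preserve the Hilbert--Schmidt ideal), so they define a holomorphic Banach vector bundle structure. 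For the Hermitian structure one uses on each fiber the Hilbert--Schmidt inner product determined by the inner products of the corresponding fibers of $\Pi_1$ and $\Pi_2$; in a holomorphic trivialization over $V$ in which the Hermitian structures of $\Pi_1$ and $\Pi_2$ are represented (as in Remark~\ref{def5}) by smooth maps $h_j\colon V\to\GL^{+}(\Ec_j)$, a short trace computation shows that the induced Hermitian structure on $\Sg_2(\Pi_1,\Pi_2)$ is represented in this trivialization by the smooth map $H\colon V\to\GL^{+}(\Sg_2(\Ec_1,\Ec_2))$ given by $H(z)T=h_2(z)Th_1(z)^{-1}$, which is positive and invertible because it is the composition of the commuting positive invertible operators ``left multiplication by $h_2(z)$'' and ``right multiplication by $h_1(z)^{-1}$'' on $\Sg_2(\Ec_1,\Ec_2)$. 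Since a Hermitian holomorphic vector bundle is by definition just a holomorphic bundle endowed with a Hermitian structure, this already settles the first assertion.

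Next I would define $\nabla$ by the Leibniz prescription: for $\Gamma\in\Omega^0(Z,\Sg_2(\Pi_1,\Pi_2))$ and every locally defined smooth section $\sigma$ of $\Pi_1$, set $(\nabla\Gamma)\sigma:=\nabla_2(\Gamma\sigma)-\Gamma(\nabla_1\sigma)$. The two $\de f\otimes(\Gamma\sigma)$ terms produced by the Leibniz rules for $\nabla_1$ and $\nabla_2$ cancel, so the right-hand side is $\Ci$-linear in $\sigma$ and determines a well-defined $1$-form $\nabla\Gamma$ with values in bounded operators from the fibers of $\Pi_1$ to the fibers of $\Pi_2$. A short computation in a holomorphic trivialization over $V$, using the connection forms $A_j$ of $\nabla_j$ furnished by Lemma~\ref{chern}, shows that over $V$ this $\nabla$ is given by the connection form $A$ acting, at a point and on a tangent vector $v$, by $T\mapsto A_2(v)T-TA_1(v)$ on $\Sg_2(\Ec_1,\Ec_2)$. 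In particular the values of $\nabla\Gamma$ lie in $\Sg_2(\Ec_1,\Ec_2)$, hence in the fibers of $\Sg_2(\Pi_1,\Pi_2)$, and $A\in\Omega^1(V,\Bc(\Sg_2(\Ec_1,\Ec_2)))$, so $\nabla$ is a genuine covariant derivative in the sense of Definition~\ref{def3}.

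It then remains to verify the two compatibility conditions, and both can be checked in a holomorphic trivialization. Compatibility with the complex structure is immediate: by Lemma~\ref{chern} (equivalently Proposition~\ref{curvature}\eqref{curvature_item1}) each $A_j$ is of type $(1,0)$, hence so is $A$ because $v\mapsto A_2(v)T-TA_1(v)$ is $\CC$-linear in $v$; so Proposition~\ref{complex} together with Definition~\ref{compl_compat} applies. For compatibility with the Hermitian structure I would verify the criterion of Proposition~\ref{hermitian}\eqref{hermitian_item2} for the pair $(H,A)$. Using the Chern relations $\partial h_j=h_jA_j$ and $\bar\partial h_j=A_j^*h_j$ from~\eqref{chern_proof_eq1}, so that $\de h_j=h_jA_j+A_j^*h_j$ and $\de(h_1^{-1})=-h_1^{-1}(\de h_1)h_1^{-1}=-A_1h_1^{-1}-h_1^{-1}A_1^*$, I would differentiate $H(z)T=h_2(z)Th_1(z)^{-1}$ and collect terms to obtain $(\de H)(T)=H(A_2T-TA_1)+A_2^*(HT)-(HT)A_1^*$; the key point is then that the Hilbert--Schmidt adjoint of the operator $T\mapsto A_2T-TA_1$ on $\Sg_2(\Ec_1,\Ec_2)$ is $T\mapsto A_2^*T-TA_1^*$ (move operators across the trace, using $(SA_1^*)^*=A_1S^*$), so that the displayed identity reads precisely $\de H=HA+A^*H$. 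Hence $\nabla$ is a Chern covariant derivative on $\Sg_2(\Pi_1,\Pi_2)$, and by the uniqueness in Theorem~\ref{main_lin} it is \emph{the} Chern covariant derivative of $\Sg_2(\Pi_1,\Pi_2)$; by construction it satisfies $\nabla\Gamma=\nabla_2\Gamma-\Gamma\nabla_1$.

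I expect the only real difficulty to lie in the bookkeeping of the last step: keeping straight the three distinct adjoint operations, in $\Bc(\Ec_1)$, in $\Bc(\Ec_2)$ and in $\Bc(\Sg_2(\Ec_1,\Ec_2))$, and the interaction of the Hilbert--Schmidt trace with left and right multiplication and with the point-dependent fiber inner products $h_1(z)$, $h_2(z)$, while checking at each stage that the operators written down genuinely belong to the Hilbert--Schmidt ideal so that all the traces converge.
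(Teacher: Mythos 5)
Your proof is correct, and its core coincides with the paper's: both localize to trivial bundles, represent the induced Hermitian structure by $H(z)T=h_2(z)Th_1(z)^{-1}$, and end up with the local connection form $A(z)T=A_2(z)T-TA_1(z)$. The difference is the direction of the argument. The paper applies Lemma~\ref{chern} to $H$ itself: since $A_j=h_j^{-1}\partial h_j$, a one-line computation of $H^{-1}\partial H$ gives $A_2\,\cdot-\cdot\,A_1$, which is therefore automatically the Chern connection form, and the operator identity $\nabla\Gamma=\nabla_2\Gamma-\Gamma\nabla_1$ is then read off from $\nabla=\de+A$, $\nabla_j=\de+A_j$; no compatibility conditions need to be rechecked. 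You instead posit the covariant derivative by the Leibniz rule $(\nabla\Gamma)\sigma=\nabla_2(\Gamma\sigma)-\Gamma(\nabla_1\sigma)$, compute its local form, and verify by hand that $A$ is of type $(1,0)$ (Proposition~\ref{complex}) and that $\de H=HA+A^*H$ (Proposition~\ref{hermitian}), the latter resting on the correct observation that the adjoint of $T\mapsto A_2T-TA_1$ on $\Sg_2(\Ec_1,\Ec_2)$ with the flat Hilbert--Schmidt inner product is $T\mapsto A_2^*T-TA_1^*$, before invoking uniqueness (Theorem~\ref{main_lin}, or just Lemma~\ref{chern} locally). Your route is somewhat longer, but it buys a more self-contained treatment of points the paper passes over as ``easily checked'': the holomorphic bundle structure of $\Sg_2(\Pi_1,\Pi_2)$ via the transition maps $T\mapsto g_2Tg_1^{-1}$, the trace computation identifying $H$, its positivity and invertibility, and the well-definedness of $\nabla_2\Gamma-\Gamma\nabla_1$ as a global $\Sg_2$-valued object. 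The paper's route buys brevity, since Lemma~\ref{chern} already packages both compatibilities into the single formula $A=H^{-1}\partial H$.
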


\begin{proof}
The conclusion has a local character hence we may assume for $j=1,2$ 
that $\Pi_j\colon Z\times\Ec_j\to Z$ is a trivial vector bundle, where $\Ec_j$ is some complex Hilbert space. 
Let $h_j\colon Z\to\GL^{+}(\Ec_j)$ be the Hermitian structure of $\Pi_j$. 
Then it is easily checked that 
the Hermitian structure of $\Pi:=\Sg_2(\Pi_1,\Pi_2)\colon Z\times\Sg_2(\Ec_1,\Ec_2)\to Z$ 
is given by 
$$
H\colon Z\to\GL^{+}(\Sg_2(\Ec_1,\Ec_2)),\quad H(z)S=h_2(z)Sh_1(z)^{-1} 
\text{ if }S\in\Sg_2(\Ec_1,\Ec_2)\text{ and }z\in Z. 
$$

It follows that 
$$
H'(z)(\cdot)S
=h_2'(z)(\cdot)Sh_1(z)^{-1}
-h_2(z)Sh_1(z)^{-1}h_1'(z)(\cdot)h_1(z)^{-1}
\in\Bc_{\RR}(T_zZ,\Sg_2(\Ec_1,\Ec_2))
$$
hence 
$$
H(z)^{-1}H'(z)S=h_2(z)^{-1}h_2'(z)S-Sh_1(z)^{-1}h_1'(z).
$$
Therefore, if we denote by $A$, $A_1$, and $A_2$ the linear connection forms 
of $\nabla$, $\nabla_1$, and $\nabla_2$, respectively, 
then by using Lemma~\ref{chern} we obtain 
$$A(z)S=A_2(z)S-SA_1(z)\in\Bc(T_zZ,\Sg_2(\Ec_1,\Ec_2)) 
\text{ if }S\in\Sg_2(\Ec_1,\Ec_2)\text{ and }z\in Z. $$
Now the assertion follows easily since $\nabla=\de+A$ and $\nabla_j=\de+A_j$ for $j=1,2$. 
\end{proof}

We now compute the Chern covariant derivatives of holomorphic subbundles of Hermitian holomorphic vector bundles. 

\begin{proposition}\label{sub}
Let $\Pi\colon D\to Z$ be any Hermitian holomorphic vector bundle 
with a holomorphic vector subbundle $\Pi_1\colon D_1\to Z$ 
and its fiberwise orthogonal complement $\Pi_2\colon D_2\to Z$. 
For $j=1,2$ we regard $\Pi_j$ as a Hermitian holomorphic bundle with respect to the Hermitian structure 
induced from $\Pi$. 
Denote by $\nabla$, $\nabla_1$, and $\nabla_2$ the Chern covariant derivatives  
of $\Pi$, $\Pi_1$, and $\Pi_2$, respectively. 
Also let $\Theta$, $\Theta_1$, and $\Theta_2$ be the corresponding curvatures. 
Then with respect to the fiberwise orthogonal direct sum decomposition  
$D=D_1\oplus D_2$ we have 
$$
\nabla=\begin{pmatrix}
          \nabla_1 & -\beta^* \\
          \beta    & \nabla_2
         \end{pmatrix}
 $$
and 
$$
\Theta=\begin{pmatrix}
   \Theta_1-\beta^*\wedge\beta & \ast \\
                          \ast & \Theta_2-\beta\wedge\beta^* 
  \end{pmatrix}
$$
for a suitable form $\beta\in\Omega^{(1,0)}(Z,\Hom(D_1,D_2))$ 
where  $\beta^*\in\Omega^{(0,1)}(Z,\Hom(D_2,D_1))$ 
is its pointwise adjoint 1-form. 
\end{proposition}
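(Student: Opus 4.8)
The plan is to reduce everything to a local computation on a trivial bundle, where we may apply Lemma~\ref{chern} to each of the three Chern connections simultaneously. First I would fix a small open set $V\subseteq Z$ over which $\Pi$ admits a holomorphic trivialization $\Pi|_V\simeq V\times\Ec$; since $\Pi_1$ is a holomorphic subbundle, after possibly shrinking $V$ we may choose the trivialization so that the fibers of $D_1$ correspond to a fixed closed subspace $\Ec_1\subseteq\Ec$, with $\Ec_2:=\Ec_1^\perp$ in the ambient Hilbert-space structure of $\Ec$. (Note that the \emph{holomorphic} trivialization will in general \emph{not} respect the Hermitian structure, so the Hermitian form $h\colon V\to\GL^+(\Ec)$ is a genuinely nontrivial smooth map.) Writing the decomposition $\Ec=\Ec_1\oplus\Ec_2$, the main point is that the holomorphic subbundle $D_1$ is represented by the \emph{constant} subspace $\Ec_1$, so that holomorphic sections of $\Pi_1$ are exactly holomorphic $\Ec_1$-valued functions.

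Next I would write the Hermitian structure $h$ in block form $h=\begin{pmatrix} a & c^* \\ c & d\end{pmatrix}$ with respect to $\Ec=\Ec_1\oplus\Ec_2$, and record that the induced Hermitian structures on $\Pi_1$ and $\Pi_2$ are \emph{not} simply $a$ and $d$: the orthogonal complement $D_2$ of $D_1$ inside $D$ is taken with respect to $h$, not with respect to the reference inner product of $\Ec$, so $D_2$ corresponds to a graph $\{(-(a^{-1}c^*)v_2,\,v_2):v_2\in\Ec_2\}$ over $\Ec_2$. Introducing the invertible block-lower-triangular change of frame $g=\begin{pmatrix} \1 & -a^{-1}c^* \\ 0 & \1\end{pmatrix}$, which is holomorphic in $z$ precisely when it isn't — so instead I would keep $g$ as a smooth frame change and use the transformation law of connection forms under smooth (not necessarily holomorphic) gauge transformations, $A\mapsto g^{-1}Ag+g^{-1}\,\de g$, but restricting the $\partial$-part since all the relevant forms are of type $(1,0)$. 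In this adapted (orthogonal) frame $h$ becomes block-diagonal $\mathrm{diag}(h_1,h_2)$ with $h_1=a$ and $h_2=d-ca^{-1}c^*$ the Schur complement, and by Lemma~\ref{chern} the Chern connection form of $\Pi$ in the adapted frame is $\wt A=\wt h^{-1}(\partial\wt h)$ where $\wt h=\mathrm{diag}(h_1,h_2)$ — wait, that is not right either, since $\wt A$ is computed from the \emph{original} $h$ transported, not from $\wt h$. The correct statement is: $\wt A = g^{-1} A g + g^{-1}\partial g$ where $A=h^{-1}\partial h$, and one computes $g^{-1} h g^{-*}=\mathrm{diag}(h_1,h_2)$; from the compatibility characterization in Proposition~\ref{hermitian}\eqref{hermitian_item2} together with the $(1,0)$-type condition (Proposition~\ref{complex}), the Chern form of $\Pi$ in the adapted frame must equal $\mathrm{diag}(h_1,h_2)^{-1}\partial\,\mathrm{diag}(h_1,h_2)$ plus an off-diagonal $(1,0)$-part, and a direct block computation identifies that off-diagonal part. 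The upshot is that $\wt A=\begin{pmatrix} A_1 & -\beta^* \\ \beta & A_2\end{pmatrix}$ with $A_j=h_j^{-1}\partial h_j$ the Chern form of $\Pi_j$, for a suitable $\beta\in\Omega^{(1,0)}(V,\Hom(\Ec_1,\Ec_2))$, and with $\beta^*$ — a priori the $h$-adjoint — being of type $(0,1)$; one checks directly that the $h$-adjoint of the $(1,0)$-block $-\beta^*$ placed in the $(2,1)$-slot is indeed $\beta$, using $h_1^*=h_1$, $h_2^*=h_2$.

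Having established the connection formula, the curvature formula follows by a purely formal block computation: $\Theta=\de\wt A+\wt A\wedge\wt A$, and multiplying out the block matrices gives the $(1,1)$-entry $\Theta_1-\beta^*\wedge\beta$ and the $(2,2)$-entry $\Theta_2-\beta\wedge\beta^*$, once one notes that the diagonal contribution $\de A_j+A_j\wedge A_j$ is exactly $\Theta_j$; the off-diagonal entries are the starred terms that we do not need to identify. I would then remark that $\beta$ is globally well defined: under a change of adapted frame (one preserving the splitting $D_1\oplus D_2$ and the Hermitian structures), $\beta$ transforms as a section of $\Hom(D_1,D_2)$, which is exactly the claim $\beta\in\Omega^{(1,0)}(Z,\Hom(D_1,D_2))$, and its being of pure type $(1,0)$ is frame-independent. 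The main obstacle, and the place where care is genuinely needed rather than routine, is the bookkeeping for the gauge transformation $g$: in the infinite-dimensional setting one must make sure all the operator-valued forms $a^{-1}$, $a^{-1}c^*$, the Schur complement $d-ca^{-1}c^*$, etc., are smooth with values in the appropriate operator spaces (invertibility of $a$ and of the Schur complement follows from $h\in\GL^+(\Ec)$ by the standard $2\times2$ block factorization of a positive invertible operator), and that the identity $\wt A=g^{-1}Ag+g^{-1}\partial g$ together with the characterization of the Chern connection genuinely pins down the block form — i.e., that compatibility with the Hermitian structure $\mathrm{diag}(h_1,h_2)$ forces the diagonal blocks to be the Chern forms $A_j$ of the subquotients and forces the $(2,1)$-block to be minus the $h$-adjoint of the $(1,2)$-block. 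Everything else is finite-rank-style linear algebra done with bounded operators, and the holomorphic-trivialization hypothesis on $\Pi_1$ is what guarantees $\beta$ lands in the $(1,0)$ part.
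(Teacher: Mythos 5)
Your argument is essentially correct, but it takes a more hands-on route than the paper, whose proof of Proposition~\ref{sub} is deliberately terse: it just invokes Proposition~\ref{HS} and follows the invariant method of \cite[Ch.~V, Th.~14.3 and 14.5]{De12} and \cite[Ch.~0, Sect.~5]{GH78}, i.e.\ one defines $\nabla_1$ as the $D_1$-component of $\nabla$ on sections of $D_1$, defines $\beta$ as the $\Hom(D_1,D_2)$-component, checks tensoriality and the $(1,0)$-type from the holomorphy of $D_1$, obtains the $-\beta^*$ block from metric compatibility, and block-multiplies $\nabla^2$, with Proposition~\ref{HS} supplying the covariant derivative of $\Hom$-valued forms. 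Your adapted-frame/Schur-complement computation via Lemma~\ref{chern} reaches the same conclusion, avoids Proposition~\ref{HS}, and has the merit of making the infinite-dimensional points explicit (invertibility of $a$ and of $d-ca^{-1}c^*$ because $h\in\GL^{+}(\Ec)$, smoothness of all operator-valued data); the cost is the gauge bookkeeping, where you have two slips to repair. With your $g$ the transported metric is $g^*hg=\mathrm{diag}(h_1,h_2)$, not $g^{-1}hg^{-*}$, and the transformation law is $\wt A=g^{-1}Ag+g^{-1}\de g$ with the \emph{full} differential: since $g$ is only smooth, the piece $g^{-1}\bar\partial g$ does not vanish, and it is precisely this $(0,1)$-piece that produces the upper-triangular block $-\beta^*$; writing $g^{-1}\partial g$ would make $\wt A$ of pure type $(1,0)$ and lose that block. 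Once corrected, your closing observation does the work: imposing $\partial\wt h=\wt h\,\wt A'+(\wt A'')^*\wt h$ (Proposition~\ref{hermitian}) with $\wt h=\mathrm{diag}(h_1,h_2)$ and $\wt A''=g^{-1}\bar\partial g$ forces the diagonal of $\wt A'$ to be $h_j^{-1}\partial h_j$, kills its $(1,2)$-entry, and identifies the $(2,1)$-entry as $\beta$ with $-\beta^*$ sitting in $\wt A''$; note also that identifying $h_2^{-1}\partial h_2$ as the Chern form of $\Pi_2$ uses that your adapted frame of $D_2$ corresponds to a holomorphic frame of the quotient $D/D_1$, which is how $\Pi_2$ carries its holomorphic structure. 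The final curvature block multiplication is fine.
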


\begin{proof}
With Proposition~\ref{HS} at hand, one can use the method of proof of \cite[Ch. V, Th. 14.3 and 14.5]{De12}; 
see also \cite[Ch. 0, Sect. 5, pages 73 and 78]{GH78}.
\end{proof}

\section{Positivity and global sections of holomorphic vector bundles}\label{Sect4}

For the sake of completeness, 
we include in this section a brief discussion on the properties of Griffiths positivity 
of holomorphic vector bundles. 
We refer to \cite{Gr69}, \cite{GH78}, \cite{La04}, and particularly to 
the elegant exposition in \cite{De12} for further details. 

\subsection*{Quotient tautological bundles}
We will give here some straightforward infinite-dimensional versions 
of certain constructions from~\cite[Ch. V, \S 16]{De12}. 
For any Hilbert space~$\Hc$ and any integer $k\ge 1$ we denote by $\Gr^{(k)}(\Hc)$ 
the set of all $k$-codimensional subspaces of~$\Hc$, 
which has the natural structure of a complex $U(\Hc)$-homogeneous Banach manifold. 
Recall that the tautological bundle over $\Gr^{(k)}(\Hc)$ is 
$$\Pi^{(k)}\colon\Tc^{(k)}(\Hc)\to\Gr^{(k)}(\Hc), \quad(\Sc,v)\mapsto\Sc,$$ 
where 
$$\Tc^{(k)}(\Hc)=\{(\Sc,v)\in\Gr^{(k)}(\Hc)\times\Hc\mid v\in\Sc\}\subseteq\Gr^{(k)}(\Hc)\times\Hc.$$
On the other hand, the \emph{quotient tautological bundle} over $\Gr^{(k)}(\Hc)$ is 
$$\Qc^{(k)}(\Hc)\to\Gr^{(k)}(\Hc), \quad(\Sc,v+\Sc)\mapsto\Sc,$$ 
where 
$$
\Qc^{(k)}(\Hc)=\{(\Sc,v+\Sc)\in\Gr^{(k)}(\Hc)\times(\Hc/\Sc)\mid v\in\Hc\}.
$$
Note that there is the short exact sequence of holomorphic vector bundles over $\Gr^{(k)}(\Hc)$
$$
\xymatrix{
0 \ar[r] & \Tc^{(k)}(\Hc) \ar[r] \ar[dr] & \Gr^{(k)}(\Hc)\times\Hc \ar[r] \ar[d] & \Qc^{(k)}(\Hc) \ar[r] \ar[dl] & 0 \\
 &  & \Gr^{(k)}(\Hc)  &  & 
}
$$
where the vertical arrow in the middle is the projection of the trivial bundle with the typical fiber~$\Hc$. 

\subsection*{Globally generated holomorphic vector bundles}
Unless otherwise specified, we let $\Pi\colon D\to Z$ be any holomorphic vector bundle whose fibers 
have the finite dimension~$k$ 
and whose base is a complex Banach manifold. 
Moreover, $\Oc(Z,D)$ stands for the space of global holomorphic sections endowed with the topology of uniform convergence  on compact sets, 
and we define the evaluation maps 
$$
(\forall z\in Z)\quad \ev_z\colon\Oc(Z,D)\to D_z,\ \ev_z(\sigma)=\sigma(z).
$$

\begin{definition}\label{glob}
\normalfont
The bundle $\Pi\colon D\to Z$ is \emph{globally generated} 
by the complex Hilbert space $\Hc$ if we have 
a continuous inclusion map $\Hc\hookrightarrow\Oc(Z,D)$ and 
for which for arbitrary $z\in Z$ we have $\ev_z(\Hc)=D_z$. 
\end{definition}

\begin{remark}
\normalfont
It follows by \cite[Ch. VII, Prop. 11.2]{De12} 
that if $Z$ is a finite-dimensional manifold, then the above notion of globally generated holomorphic vector bundle 
agrees with the one introduced in \cite[Ch. VII, Def. 11.1(a)]{De12}.  
\end{remark}

\begin{remark}\label{exact}
\normalfont
If the bundle $\Pi\colon D\to Z$ is globally generated 
by the complex Hilbert space $\Hc$, then we define 
$$\begin{aligned}
(\forall z\in Z)\quad N_z:= & \{(z,\sigma)\in Z\times \Hc\mid \sigma(z)=0\} \\
=&\{z\}\times \Ker(\ev_z\mid_{\Hc}) \\
\subseteq & Z\times\Hc
\end{aligned}$$
and $N:=\bigcup\limits_{z\in Z}N_z$. 

Now \emph{assume the fibers of $\Pi$ are finite-dimensional}. 
Then $N$ is the total space of a subbundle of the trivial Hermitian bundle $Z\times\Hc\to Z$.  
We have the fiberwise exact sequence of Hermitian bundles 

\begin{equation}\label{exact_eq1}
0\to N\hookrightarrow Z\times\Hc\mathop{\longrightarrow}\limits^{\ev} D\to0
\end{equation}
and the commutative diagram 
$$
\xymatrix{
D \ar[d]_{\Pi} \ar[r]^{\Psi_{\Hc}\quad} & \Qc^{(k)}(\Hc) \ar[d] \\
Z \ar[r]^{\psi_\Hc\quad} & \Gr^{(k)}(\Hc)}
$$
where for every $z\in Z$ we have 
$$(\forall\xi\in D_z)\quad \Psi_{\Hc}(\xi)=\{\sigma\in\Hc\mid\sigma(z)=\xi\}\in\Hc/\Ker(\ev_z\mid_{\Hc})$$
where we performed the identification 
$$\Hc/\Ker(\ev_z\mid_{\Hc})\simeq D_z,\quad 
\sigma+\Ker(\ev_z\mid_{\Hc})\mapsto \ev_z(\sigma).$$ 
\end{remark}

\subsection*{Positivity curvature condition} 
In order to introduce the positivity curvature condition on the covariant derivatives, 
we need the following remark, 
which is well known at least in the case of the scalar-valued bilinear maps.

\begin{remark}\label{three}
\normalfont
Let $\Vc$ be any complex Banach space and $\Ac$ be a complex associative Banach $*$-algebra, 
and denote $\Ac^{\sa}:=\{a\in\Ac\mid a^*=a\}$. 
We define the following spaces of bounded $\RR$-bilinear maps: 
\begin{itemize}
\item the space $\Herm(\Vc,\Ac)$ of all $\RR$-bilinear maps $\Psi\colon\Vc\times\Vc\to\Ac$ 
satisfying 
$$
(\forall v_1,v_2\in\Vc)\quad 
\Psi(v_1,v_2)^*=\Psi(v_2,v_1)=\ie\Psi(v_2,\ie v_1) 
$$
\item the space $\Symm(\Vc,\Ac)$ of all $\RR$-bilinear maps $\psi\colon\Vc\times\Vc\to\Ac$ 
satisfying 
$$
(\forall v_1,v_2\in\Vc)\quad 
\psi(v_1,v_2)=\psi(v_2,v_1)=\psi(\ie v_1,\ie v_2)
$$
\item the space $\Skew(\Vc,\Ac)$ of all $\RR$-bilinear maps $\omega\colon\Vc\times\Vc\to\Ac^{\sa}$ 
satisfying 
$$
(\forall v_1,v_2\in\Vc)\quad 
\omega(v_1,v_2)=-\omega(v_2,v_1)=\omega(\ie v_1,\ie v_2).
$$ 
\end{itemize}
If $\Psi\in\Herm(\Vc,\Ac)$, $\psi\in\Symm(\Vc,\Ac)$, and $\omega\in\Skew(\Vc,\Ac)$, 
then any of these three bilinear maps determines the other two maps in a unique manner such that 
the equation 
$$
(\forall v_1,v_2\in\Vc)\quad\Psi(v_1,v_2)=\psi(v_1,v_2)+\ie\omega(v_1,v_2)
$$
be satisfied, and \emph{canonical $\RR$-linear isomorphisms} are thus defined between the spaces 
$\Herm(\Vc,\Ac)$, $\Symm(\Vc,\Ac)$, and $\Skew(\Vc,\Ac)$, respectively. 

More precisely, the $\RR$-bilinear maps involved in the above equation are related by the formulas
$$
\begin{aligned} 
\omega(v_1,v_2)&=\psi(v_1,\ie v_2) \\
\psi(v_1,v_2)&=\frac{1}{2}(\Psi(v_1,v_2)+\Psi(v_2,v_1))\\
\omega(v_1,v_2)&=\frac{1}{2\ie}(\Psi(v_1,v_2)-\Psi(v_2,v_1))
\end{aligned}$$
for all $v_1,v_2\in\Vc$. 
\end{remark}

We now introduce the notion of Griffiths positivity of bundle-valued differential 2-forms, 
which goes back to \cite{Gr69}; see also \cite{GH78}, \cite{La04}, \cite{De12}. 
In the case of infinite-rank vector bundles, a version of this notion 
was also used in \cite{Ber09}. 

\begin{definition}\label{Griffiths}
\normalfont
Let $\Pi\colon D\to Z$ be any Hermitian holomorphic bundle. 
A bundle-valued differential form 
$\omega\in\Omega^2(Z,\End(\Pi))$ is \emph{Griffiths nonnegative} 
if for every $z\in Z$ the bounded $\RR$-bilinear map 
$\omega_z\colon T_zZ\times T_zZ\to \Bc(D_z)$ 
satisfies the conditions $\omega_z\in\Skew(T_zZ,\Bc(D_z))$ 
and $\Psi_z(x,x)\ge 0$ in $\Bc(D_z)$ for all $x\in T_zZ$, 
where $\Psi_z\in\Herm(T_zZ,\Bc(D_z))$ is the the sesquilinear map 
which canonically corresponds to $\omega_z$ via Remark~\ref{three}.  
If moreover for every $x\in T_zZ\setminus\{0\}$ 
we have $\Psi_z(x,x)\ne 0$, then we say that $\omega$ is \emph{Griffiths positive}. 
\end{definition}

In the case of the bundles with a finite-dimensional base, 
the following result can be found in \cite[Ch. 0, Sect. 5]{GH78} or \cite[Ch. VII, Cor. 11.5]{De12}.

\begin{theorem}\label{basic1}
Let $\Pi\colon D\to Z$ be any holomorphic vector bundle 
which is globally generated by the complex Hilbert space $\Hc\hookrightarrow\Oc(Z,D)$ 
and has finite-dimensional fibers. 
Then there exists a unique Hermitian structure on $\Pi$ 
such that for every $z\in\Hc$ the adjoint of the evaluation map $\ev_z\colon\Hc\to D_z$ 
is an isometry, 
and the curvature of the corresponding Chern covariant derivative is Griffiths nonnegative. 
\end{theorem}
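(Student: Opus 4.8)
The plan is to reduce everything to the universal situation of the quotient tautological bundle $\Qc^{(k)}(\Hc)$ over $\Gr^{(k)}(\Hc)$ and then to compute its Chern curvature via Proposition~\ref{sub}. First I would use Remark~\ref{exact}: since $\Pi$ is globally generated by $\Hc$ with finite-dimensional fibers, there is a classifying map $\psi_\Hc\colon Z\to\Gr^{(k)}(\Hc)$ together with a bundle map $\Psi_\Hc\colon D\to\Qc^{(k)}(\Hc)$ covering it, and $\Pi$ is the pullback $\psi_\Hc^*\Qc^{(k)}(\Hc)$. The Hermitian structure whose existence and uniqueness is asserted is exactly the one for which, under the identification $D_z\simeq\Hc/\Ker(\ev_z|_\Hc)$, the fiber inner product is the one inherited from $\Hc$ by declaring $\ev_z^*$ an isometry; equivalently, it is the quotient Hilbert-space norm, which is the pullback under $\psi_\Hc$ of the natural $U(\Hc)$-invariant Hermitian structure on $\Qc^{(k)}(\Hc)$. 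Uniqueness is immediate because the condition ``$\ev_z^*$ is an isometry for all $z$'' pins down the inner product on each fiber.

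Next I would establish the curvature statement for the universal bundle $\Qc^{(k)}(\Hc)$ itself. Here I invoke the short exact sequence of holomorphic vector bundles
$$
0\to\Tc^{(k)}(\Hc)\to\Gr^{(k)}(\Hc)\times\Hc\to\Qc^{(k)}(\Hc)\to0,
$$
in which the middle term is the trivial bundle with constant fiber $\Hc$ and hence has the flat Chern covariant derivative, curvature $\Theta=0$. Identifying $\Qc^{(k)}(\Hc)$ fiberwise with the orthogonal complement $\Tc^{(k)}(\Hc)^\perp$ inside the trivial bundle, Proposition~\ref{sub} applies with $\Pi$ the trivial bundle, $\Pi_1=\Tc^{(k)}(\Hc)$, and $\Pi_2\simeq\Qc^{(k)}(\Hc)$. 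The proposition gives, for the second-quotient curvature, $\Theta_2=\beta\wedge\beta^*$ for a suitable $\beta\in\Omega^{(1,0)}(Z,\Hom(D_1,D_2))$, since $\Theta=0$ forces $\Theta_2-\beta\wedge\beta^*=0$. It then remains to check that a curvature form of the shape $\beta\wedge\beta^*$, with $\beta$ of type $(1,0)$ and $\beta^*$ its pointwise adjoint $(0,1)$-form, is Griffiths nonnegative in the sense of Definition~\ref{Griffiths}: writing $\beta_z=\sum\beta_z(\cdot)$ one computes that the associated Hermitian form $\Psi_z(x,x)$ equals (up to the normalization in Remark~\ref{three}) $\beta_z(x)^*\beta_z(x)$ — wait, more precisely $\beta_z(x)\beta_z(x)^*$ acting on $D_2{}_z$ — which is a nonnegative operator for every tangent vector $x$.

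Finally I would transfer the result from the universal bundle to an arbitrary globally generated $\Pi$. Since $\Pi\cong\psi_\Hc^*\Qc^{(k)}(\Hc)$ as a Hermitian holomorphic bundle, and since the Chern covariant derivative is natural under pullback by holomorphic maps (it is characterized by local conditions that are stable under holomorphic pullback, by Theorem~\ref{main_lin} and Lemma~\ref{chern}), the curvature of the Chern covariant derivative of $\Pi$ at $z$ is $\psi_\Hc^*$ applied to the curvature of $\Qc^{(k)}(\Hc)$ at $\psi_\Hc(z)$, i.e.\ $\Theta^\Pi_z(x,y)=\Theta^{\Qc}_{\psi_\Hc(z)}(d\psi_\Hc(x),d\psi_\Hc(y))$. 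Griffiths nonnegativity is preserved under this operation: the Hermitian form $x\mapsto\Psi^\Pi_z(x,x)=\Psi^{\Qc}_{\psi_\Hc(z)}(d\psi_\Hc(x),d\psi_\Hc(x))\ge0$ since the right-hand side is nonnegative for every vector in $T_{\psi_\Hc(z)}\Gr^{(k)}(\Hc)$, in particular for those in the image of $d\psi_\Hc$. This completes the argument.

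\textbf{Main obstacle.} The delicate point is the naturality of the Chern covariant derivative (and hence of the curvature) under the pullback $\psi_\Hc$, together with the verification that Proposition~\ref{sub} really does apply with $\Qc^{(k)}(\Hc)$ in the role of the orthogonal complement subbundle $\Pi_2$ — one must check that the quotient Hermitian structure dictated by ``$\ev_z^*$ isometric'' coincides with the subbundle structure $\Tc^{(k)}(\Hc)^\perp\subseteq\Gr^{(k)}(\Hc)\times\Hc$ carries, and that the holomorphic structures match. In infinite dimensions one must also be slightly careful that the classifying map $\psi_\Hc$ is genuinely holomorphic and that $N$ is a genuine holomorphic subbundle (as asserted in Remark~\ref{exact} for finite-dimensional fibers); these are the places where the infinite-dimensionality of $\Hc$ could in principle cause trouble, although for finite-rank $\Pi$ the standard finite-codimensional Grassmannian arguments go through. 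The rest is the routine computation, already referenced in the proof of Proposition~\ref{sub}, that $\beta\wedge\beta^*$ is a Griffiths nonnegative $(1,1)$-form.
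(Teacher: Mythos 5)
Your outline is correct and rests on the same two ingredients the paper itself cites (Remark~\ref{exact} and the subbundle/quotient curvature formula of Proposition~\ref{sub}, which in turn uses Proposition~\ref{HS}), but you organize it differently: you first compute the curvature of the universal quotient bundle $\Qc^{(k)}(\Hc)$ over $\Gr^{(k)}(\Hc)$ and then pull everything back along the classifying map $\psi_\Hc$, whereas the paper's (two-line, sketched) route, following Griffiths--Harris Ch.~0, \S 5, is to apply the quotient computation directly over $Z$: in the exact sequence $0\to N\to Z\times\Hc\to D\to 0$ of Remark~\ref{exact} one takes $\Pi_1=N$ and $\Pi_2=N^\perp\simeq D$ inside the flat trivial bundle $Z\times\Hc$, so Proposition~\ref{sub} gives at once that the curvature of $D$, for the metric making every $\ev_z^*$ isometric, is $\beta\wedge\beta^*$, hence Griffiths nonnegative. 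The direct route spares you precisely the two points you flag as delicate, namely the holomorphy of $\psi_\Hc$ and the naturality of the Chern covariant derivative under holomorphic pullback (both true, and provable from the local formula $A=h^{-1}\partial h$ together with $\partial(h\circ\psi)=\psi^*(\partial h)$ for holomorphic $\psi$, but an extra lemma nonetheless); what your detour buys is the conceptually pleasant identification $\Pi\simeq\psi_\Hc^*\Qc^{(k)}(\Hc)$, which is anyway recorded in the commutative diagram of Remark~\ref{exact}. In either version you should add a word on why the fiber metric $(\xi\mid\eta)_z=(\ev_z^*\xi\mid\ev_z^*\eta)_{\Hc}$ depends smoothly on $z$ (e.g.\ via local frames coming from the subbundle $N$), and your hedged sign check that the $(1,1)$-form $\beta\wedge\beta^*$ is Griffiths nonnegative is indeed just the normalization of Remark~\ref{three}; neither point is a gap relative to the paper, whose own proof is only a sketch deferring full details elsewhere.
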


\begin{proof}
One can use the method of proof from \cite[Ch. 0, Sect. 5]{GH78}, 
by relying on the above Proposition~\ref{HS} and Remark~\ref{exact}. 
\end{proof}

\section{Reproducing kernels and Griffiths positivity}\label{Sect5}

This section contains one of our main results, 
which is a necessary condition for existence of reproducing kernels on vector bundles 
(Theorem~\ref{final_th}). 
It relies on Griffith positivity properties of 
certain covariant derivatives associated with reproducing kernels on Hermitian holomorphic bundles 
which satisfy a certain admissibility condition. 
In order to introduce the latter notion, we need the following lemma.

\begin{lemma}\label{equivKinverse3}
In the setting of Definition~\ref{reprokernel}, consider the following assertions at 
an arbitrary point $s\in Z$:
\begin{enumerate}
\item\label{equivKinverse3_item1}
The operator $\widehat{K}\vert_{D_s}\colon D_s\to\Hc^K$ is injective and has closed range.
\item\label{equivKinverse3_item2} 
The operator $K(s,s)\in\Bc(D_s)$ is invertible.
\item\label{equivKinverse3_item2bis} 
The operator $K(s,s)\in\Bc(D_s)$ is surjective.
\item\label{equivKinverse3_item3} 
The evaluation map $\ev_s\colon\Hc^K\to D_s$ is surjective. 
\end{enumerate}
Then we have 
\eqref{equivKinverse3_item1}
$\iff$\eqref{equivKinverse3_item2}
$\iff$\eqref{equivKinverse3_item2bis}$\Longrightarrow$
\eqref{equivKinverse3_item3}, 
and all the above four assertions are equivalent if moreover $\dim D_s<\infty$. 
\end{lemma}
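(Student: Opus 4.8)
The plan is to work with the operator $\widehat{K}\vert_{D_s}\colon D_s\to\Hc^K$ and its adjoint, relating all four conditions to the behaviour of this single operator. First I would record the fundamental identity linking $K$, $\widehat{K}$, and the inner product on $\Hc^K$: for $\xi,\eta\in D_s$ one has $(\widehat{K}\xi\mid\widehat{K}\eta)_{\Hc^K}=(K(s,s)\xi\mid\eta)_s$ directly from \eqref{reprokernel_eq2}, and more generally $(\widehat{K}\xi\mid\sigma)_{\Hc^K}=(\xi\mid\sigma(s))_s$ for $\sigma\in\Hc^K$ (the reproducing property, valid on the dense span of the $K_\eta$ and extended by continuity). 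The latter says precisely that $(\widehat{K}\vert_{D_s})^*=\ev_s\colon\Hc^K\to D_s$, and that $(\widehat{K}\vert_{D_s})^*\widehat{K}\vert_{D_s}=K(s,s)$ on $D_s$. These two facts are the whole engine of the proof.

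Next I would run the equivalences. For \eqref{equivKinverse3_item2}$\Leftrightarrow$\eqref{equivKinverse3_item2bis}: $K(s,s)=T^*T$ with $T:=\widehat{K}\vert_{D_s}$ is a positive operator on the Hilbert space $D_s$, so it is surjective iff it is bounded below iff it is invertible (a positive operator with closed range dense in its own range is invertible). For \eqref{equivKinverse3_item1}$\Leftrightarrow$\eqref{equivKinverse3_item2}: $T$ is injective with closed range iff $T$ is bounded below iff $T^*T=K(s,s)$ is bounded below iff (being positive) it is invertible; here I would use the standard fact $\|T\xi\|^2=(T^*T\xi\mid\xi)$ so that a lower bound for $T$ is equivalent to one for $T^*T$. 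For \eqref{equivKinverse3_item2bis}$\Rightarrow$\eqref{equivKinverse3_item3}: since $K(s,s)=T^*T$ is surjective and factors through $T^*=\ev_s$, the map $\ev_s\colon\Hc^K\to D_s$ is a fortiori surjective.

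Finally, for the finite-dimensional addendum, I would show \eqref{equivKinverse3_item3}$\Rightarrow$\eqref{equivKinverse3_item2} when $\dim D_s<\infty$: surjectivity of $T^*=\ev_s$ forces $T$ to be injective (on a finite-dimensional space, $\Ker T=(\Ran T^*)^\perp=\{0\}$), and an injective operator out of a finite-dimensional space automatically has closed range, giving \eqref{equivKinverse3_item1}, which is already known to imply \eqref{equivKinverse3_item2}. I expect the only genuinely delicate point to be the careful justification of the reproducing identity $(\widehat{K}\xi\mid\sigma)_{\Hc^K}=(\xi\mid\sigma(s))_s$ for all $\sigma\in\Hc^K$ rather than just for $\sigma$ in the span of the $K_\eta$: one must check that the functional $\sigma\mapsto(\xi\mid\sigma(s))_s$ is continuous on $\Hc^K$, which follows because $|(\xi\mid\sigma(s))_s|=|(\widehat{K}\xi\mid\sigma)_{\Hc^K}|\le\|\widehat{K}\xi\|_{\Hc^K}\|\sigma\|_{\Hc^K}$ on the dense subspace, so the identity extends by density. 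Everything else is routine Hilbert-space operator theory applied to the factorization $K(s,s)=(\widehat{K}\vert_{D_s})^*(\widehat{K}\vert_{D_s})$.
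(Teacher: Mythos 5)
Your proposal is correct and follows essentially the same route as the paper: both arguments hinge on the adjoint relation $\ev_s=(\widehat{K}\vert_{D_s})^*$, the factorization $\ev_s\circ\widehat{K}\vert_{D_s}=K(s,s)$ (giving \eqref{equivKinverse3_item2bis}$\Rightarrow$\eqref{equivKinverse3_item3}), the self-adjointness and nonnegativity of $K(s,s)$ for \eqref{equivKinverse3_item2}$\iff$\eqref{equivKinverse3_item2bis}, and the finite-dimensionality of $D_s$ to get closed range in the converse. The only difference is that the paper quotes \cite[Lemma 3.4]{BG13} for \eqref{equivKinverse3_item1}$\iff$\eqref{equivKinverse3_item2}, whereas you prove it directly from $K(s,s)=(\widehat{K}\vert_{D_s})^*\widehat{K}\vert_{D_s}$ via bounded-below estimates, which is a correct self-contained substitute.
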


\begin{proof} 
The equivalence \eqref{equivKinverse3_item1}$\iff$\eqref{equivKinverse3_item2} 
was established in \cite[Lemma 3.4]{BG13}. 
Moreover we have \eqref{equivKinverse3_item2}$\iff$\eqref{equivKinverse3_item2bis} 
since $K(s,s)$ is always a bounded (nonnegative) self-adjoint operator on the complex Hilbert space $D_s$, 
as a consequence of \eqref{reprokernel_eq1} in Definition~\ref{reprokernel} for $n=1$, 
hence $\Ker(K(s,s))=(\Ran(K(s,s)))^\perp$. 

Next, for every $\xi\in D_s$ we have $\widehat{K}(\xi)=K_\xi=K(\cdot,s)\xi$ 
hence $(\ev_s\circ\widehat{K}\vert_{D_s})(\xi)=K(s,s)\xi$. 
This shows that \eqref{equivKinverse3_item2bis}$\Longrightarrow$\eqref{equivKinverse3_item3}.

Now note that for all $t\in Z$, $\eta\in D_t$, and $\xi\in D_s$ we have 
$$
\begin{aligned}
((\widehat{K}\vert_{D_s})(\xi)\mid K_\eta)_{\Hc^K}
& =(K_\xi\mid K_\eta)_{\Hc^K}
=(K(t,s)\xi\mid\eta)_{D_t}
=(\xi\mid K(s,t)\eta)_{D_s} \\
& =(\xi\mid\ev_s(K_\eta))_{D_s}  
  \end{aligned}
$$
hence the operators $\ev_s\colon\Hc^K\to D_s$ and $\widehat{K}\vert_{D_s}\colon D_s\to\Hc^K$ 
are adjoint to each other. 
This implies $\Ker(\widehat{K}\vert_{D_s})=(\Ran\ev_s)^\perp$. 
Therefore, if \eqref{equivKinverse3_item3} holds true, 
then $\widehat{K}\vert_{D_s}\colon D_s\to\Hc^K$ is injective, 
and if moreover $\dim D_s<\infty$, then the range of $\widehat{K}\vert_{D_s}$ 
is in turn finite-dimensional hence is a closed subspace of $\Hc^K$, 
and thus \eqref{equivKinverse3_item1} also holds true. 
This concludes the proof. 
\end{proof}

The following is a special case of \cite[Def. 3.5]{BG13}. 

\begin{definition}\label{adm_def_finite}
\normalfont 
Assume $\Pi\colon D\to Z$ is a Hermitian bundle whose fibers are finite dimensional 
(for instance, $\Pi$ is a line bundle). 
A reproducing kernel $K$ on $\Pi$ 
 is called \emph{admissible} 
if it has the following properties: 
\begin{itemize}
\item[(a)] The kernel $K$ is smooth as a section of the bundle $\rm{Hom}(p_2^*\Pi,p_1^*\Pi)$. 
\item[(b)] For every $s\in Z$ the operator $K(s,s)\in\Bc(D_s)$ is invertible.  
\end{itemize} 
\end{definition}

\begin{remark}
\normalfont
In the setting of Definition~\ref{adm_def_finite}, the admissible reproducing kernel $K$ 
has the additional property that 
the mapping $\zeta_K\colon Z\to\Gr(\Hc^K)$ is smooth. 
See \cite[Ex. 3.6]{BG13} for details. 
\end{remark}

If $\Pi\colon D\to Z$ is any Hermitian holomorphic  bundle with the space of holomorphic sections 
denoted by $\Oc(Z,D)$  
and $K$ is any reproducing kernel on $\Pi$, then we say that $K$ is \emph{holomorphic} if 
for every $\xi\in D$ we have $K_\xi\in\Oc(Z,D).$

\begin{theorem}\label{final_th}
Let $\Pi\colon D\to Z$  be any Hermitian holomorphic vector bundle with finite-dimensional fibers 
and with its Hermitian structure denoted by $\{(\cdot\mid\cdot)_z\}_{z\in Z}$. 
If $K$ is a holomorphic admissible reproducing kernel on $\Pi$,  
then $\{(K(z,z)\,\cdot\mid\cdot)_z\}_{z\in Z}$
is a new Hermitian structure on $\Pi$,   
for which the curvature of its Chern covariant derivative is Griffiths positive. 
\end{theorem}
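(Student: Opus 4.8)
The plan is to reduce the statement to the universal situation treated in Theorem~\ref{basic1}, using the reproducing kernel Hilbert space $\Hc^K$ as the globally generating Hilbert space. First I would verify that, under the admissibility hypothesis, $\Pi$ is globally generated by $\Hc=\Hc^K$ in the sense of Definition~\ref{glob}: since $K$ is holomorphic, the inclusion $\Hc^K\hookrightarrow\Oc(Z,D)$ lands in holomorphic sections and is continuous (uniform convergence on compacts follows from the pointwise estimates $\|\sigma(z)\|_z\le\|\sigma\|_{\Hc^K}\|K(z,z)\|^{1/2}$ combined with smoothness of $K$), and admissibility condition~(b) — that $K(s,s)$ be invertible for every $s$ — together with Lemma~\ref{equivKinverse3}, in particular the implication \eqref{equivKinverse3_item2bis}$\Longrightarrow$\eqref{equivKinverse3_item3}, guarantees that $\ev_z\colon\Hc^K\to D_z$ is surjective at every $z\in Z$. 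Thus $\Pi$ is globally generated by $\Hc^K$, and its fibers are finite-dimensional by hypothesis, so Theorem~\ref{basic1} applies.

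Next I would identify the Hermitian structure produced by Theorem~\ref{basic1} with the one in the statement. Theorem~\ref{basic1} furnishes the \emph{unique} Hermitian structure on $\Pi$ for which each $\ev_z^*\colon D_z\to\Hc^K$ is an isometry. Using the computation in the proof of Lemma~\ref{equivKinverse3}, the adjoint of $\ev_s$ is precisely $\widehat{K}\vert_{D_s}$, and $\ev_s\circ\widehat{K}\vert_{D_s}=K(s,s)$. Hence for $\xi,\eta\in D_s$ one has $(\ev_s^*\xi\mid\ev_s^*\eta)_{\Hc^K}=(\widehat K\xi\mid\widehat K\eta)_{\Hc^K}=(K(s,s)\xi\mid\eta)_s$ by \eqref{reprokernel_eq2}. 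Therefore declaring $\ev_z^*$ isometric is the same as replacing $(\cdot\mid\cdot)_z$ by $(K(z,z)\cdot\mid\cdot)_z$; since $K(z,z)$ is positive and invertible, this is indeed a bona fide Hermitian structure (verifying smoothness in $z$ uses admissibility condition~(a)). So the new Hermitian structure in the statement coincides with the canonical one from Theorem~\ref{basic1}, and consequently the curvature of its Chern covariant derivative is Griffiths nonnegative.

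It remains to upgrade Griffiths nonnegativity to Griffiths \emph{positivity}, i.e.\ strict positivity of $\Psi_z(x,x)$ for $x\in T_zZ\setminus\{0\}$. This is the step I expect to be the main obstacle, because Theorem~\ref{basic1} only gives nonnegativity; the strictness must come from an extra injectivity built into the reproducing-kernel setup. The idea is to use the commutative diagram and exact sequence of Remark~\ref{exact}: the classifying map $\psi_{\Hc^K}\colon Z\to\Gr^{(k)}(\Hc^K)$ induced by $\Hc^K$ equals (up to the identification of $\Gr^{(k)}$ with a Grassmannian of closed subspaces) the map $\zeta_K$ of Definition~\ref{reprokernel}, composed with passing to orthocomplements; admissibility makes $\zeta_K$ smooth, and one shows it is moreover an immersion, so $T_z\psi_{\Hc^K}$ is injective on $T_zZ$. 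Then, via the standard computation behind Theorem~\ref{basic1} — $\Pi$ is the pullback along $\psi_{\Hc^K}$ of the quotient tautological bundle $\Qc^{(k)}(\Hc^K)$, whose Chern curvature on the orthocomplement block is $\beta\wedge\beta^*$ with $\beta$ the second fundamental form of the tautological sub-bundle inside the trivial bundle — the quantity $\Psi_z(x,x)$ is a sum of terms of the form $\|\beta_z(x)\cdot\|^2$, which vanishes only if $x$ lies in the kernel of the relevant derivative. The immersivity of $\zeta_K$, or equivalently the nondegeneracy coming from $K(z,z)$ being invertible together with holomorphy of $K$, forces that kernel to be $\{0\}$, giving strict positivity. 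I would finish by noting that full details of the curvature computation follow the pattern of Proposition~\ref{sub} and \cite[Ch.~0, Sect.~5]{GH78}, so only the immersion property of $\zeta_K$ genuinely needs new argument.
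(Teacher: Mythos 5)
Your first two paragraphs reproduce the paper's own argument: admissibility together with Lemma~\ref{equivKinverse3} gives surjectivity of every evaluation map $\ev_s\colon\Hc^K\to D_s$, holomorphy of $K$ gives the continuous inclusion $\Hc^K\hookrightarrow\Oc(Z,D)$, hence $\Pi$ is globally generated by $\Hc^K$ in the sense of Definition~\ref{glob} and Theorem~\ref{basic1} applies. Your explicit identification $\ev_s^*=\widehat{K}\vert_{D_s}$ and $\ev_s\circ\widehat{K}\vert_{D_s}=K(s,s)$, showing that the canonical metric of Theorem~\ref{basic1} is exactly $\{(K(z,z)\,\cdot\mid\cdot)_z\}_{z\in Z}$, is a detail the paper leaves implicit, and it is correct.

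The genuine gap is in your third step. You are right to flag that Theorem~\ref{basic1} only yields Griffiths \emph{nonnegativity} (the paper's proof in fact stops exactly there), but the proposed upgrade fails: holomorphy and admissibility do \emph{not} imply that $\zeta_K$ (equivalently, the classifying map into the Grassmannian of $\Hc^K$) is an immersion, and the assertion ``one shows it is moreover an immersion'' cannot be proved because it is false in general. Take the trivial line bundle over $Z=\DD$ with the constant kernel $\kappa\equiv\1$: it is smooth, $\kappa(\cdot,t)$ is holomorphic, and $K(s,s)=\1$ is invertible, so this kernel is holomorphic and admissible; yet $\Hc^K$ consists of the constant sections, $\zeta_K$ is a constant map, the new Hermitian structure is $h\equiv\1$, and the Chern curvature vanishes identically --- Griffiths nonnegative, but not Griffiths positive in the sense of Definition~\ref{Griffiths}. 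So strict positivity cannot be extracted from the stated hypotheses by your route (or any other): what your argument, like the paper's, actually establishes is nonnegativity, and strictness would require an additional nondegeneracy hypothesis on $K$ --- precisely the injectivity of the second fundamental form, i.e.\ that $\zeta_K$ be an immersion --- which has to be imposed rather than derived.
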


\begin{proof}
Let $\Hc^K$ be the reproducing kernel Hilbert space associated to~$K$. 
By hypothesis $K$ is a holomorphic reproducing kernel on $\Pi$,   
hence we have a continuous inclusion map $\Hc^K\hookrightarrow\Oc(Z,D)$. 
Since $K$ is admissible, Lemma~\ref{equivKinverse3} implies 
that the evaluation map $\ev_s\colon\Hc^K\to D_s$ is surjective 
for arbitrary $s\in Z$. 
Thus the bundle $\Pi$ is globally generated in the sense of Definition~\ref{glob}, 
and then the conclusion follows by Theorem~\ref{basic1}. 
\end{proof}

\begin{example}
\normalfont
Consider the special case of Example~\ref{ex1} 
with $\Ec=\CC$, 
$$
Z=\DD=\{z\in\CC\mid\vert z\vert<1\}
$$ 
and for every $\nu\ge1$ define 
$$
K_{\DD}^{(\nu)}\colon \DD\times \DD\to\CC,\quad K_{\DD}^{(\nu)}(z_1,z_2)=\frac{1}{(1-z_1\bar{z}_2)^\nu}
$$
which is the reproducing kernel of the Bergman space on the unit disc if $\nu>1$  
and of the Hardy space if $\nu=1$; 
see also \cite[subsect. 5.1, (b.1)]{BG13}.

The new Hermitian structure on the trivial bundle $\Pi\colon D= Z\times\CC\to Z$ 
(referred to in Theorem~\ref{final_th}) 
is in this case given by 
$$
h^{(\nu)}\colon \DD\to\GL^+(\Ec)\simeq(0,\infty),\quad 
h^{(\nu)}(z)=\frac{1}{(1-\vert z\vert^2)^\nu}=\frac{1}{(1-z\bar{z})^\nu}.
$$
We have 
$$
\frac{1}{h^{(\nu)}(z)}\cdot\partial h^{(\nu)}(z)
=(1-z\bar{z})^\nu\cdot \frac{\nu\bar{z}}{(1-z\bar{z})^{\nu+1}}
=\frac{\nu\bar{z}}{1-z\bar{z}}
$$
hence by using Lemma~\ref{chern} we obtain the following expression for 
the linear connection form of the Chern covariant derivative 
corresponding to the Hermitian structure determined by $h$: 
$$
A^{(\nu)}\colon\DD\to\Bc(\CC),\quad 
A^{(\nu)}_z=\frac{\nu\bar{z}}{1-z\bar{z}}\de z. 
$$
Since 
$$
\frac{\partial}{\partial\bar{z}}\Bigl(\frac{\bar{z}}{1-z\bar{z}}\Bigr)
=\frac{(1-z\bar{z})-\bar{z}\cdot(-z)}{(1-z\bar{z})^2}=\frac{1}{(1-z\bar{z})^2} 
$$
it then follows by Proposition~\ref{curvature}\eqref{curvature_item2} 
that the curvature of the aforementioned Chern covariant derivative is the 2-form
$$
\Theta^{(\nu)}\colon\DD\to\Bc_{\RR}(\CC\wedge\CC,\CC),\quad 
\Theta^{(\nu)}_z=\frac{\nu}{(1-\vert z\vert^2)^2}\,\de \bar{z}\wedge\de z
$$
which clearly is Griffiths positive. 
It also follows by the above formula that the curvature $\Theta^{(\nu)}$ depends linearly 
on $\nu$, and for all $\nu\ge 1$ we have $\Theta^{(\nu)}=\nu\Theta^{(1)}$. 
\end{example}

\begin{example}
\normalfont 
Recall that if $\Hc$ is any complex Hilbert space,  
then the mapping $h\mapsto (\cdot\mid h)_{\Hc}$ 
is an antilinear isometric isomorphism from $\Hc$ onto its topological dual $\Hc^*$. 
For this reason $\Hc^*$ will be alternatively described as the complex Hilbert space whose 
underlying structure of real Hilbert space is the one of $\Hc$, 
while the complex structure is the opposite to the complex structure of $\Hc$; 
that is, we may assume $\Hc^*=\Hc$ as real vector spaces, with the complex scalar products 
related by $(h_1\mid h_2)_{\Hc^*}=(h_2\mid h_1)_{\Hc}$ for all $h_1,h_2\in\Hc=\Hc^*$.  
So $\Hc$ and $\Hc^*$ have the same closed complex subspaces 
and the identity map is an antiholomorphic diffeomorphism between  
their Grassmann manifolds $\Gr(\Hc)$ and $\Gr(\Hc^*)$. 

With this convention, if $\Pi\colon D\to Z$ is a holomorphic Hermitian bundle 
and $D^*:=\bigsqcup\limits_{s\in Z} D_s^*=\bigsqcup\limits_{s\in Z} D_s=D$ as real manifolds, 
then the dual bundle $\Pi^*\colon D^*\to Z$ is again a holomorphic Hermitian bundle, 
whose complex structure is fiberwise the opposite to the complex structure of $D$, 
while both mappings $\Pi$ and $\Pi^*$ are holomorphic 
(i.e., they are smooth and the differentials are $\CC$-linear) 
onto the same complex manifold $Z$. 
In particular, if $K$ is a reproducing kernel on $\Pi$, then it is also a reproducing kernel 
on $\Pi^*$, to be denoted by $K^*$, 
and it follows by \eqref{reprokernel_eq2} that the corresponding reproducing kernel Hilbert spaces 
are related by $\Hc^{K^*}=(\Hc^K)^*$. 
In addition, one can also check that $K$ is admissible if and only if $K^*$ is. 
In this case, if $\Theta\in\Omega^2(Z,\Pi)$ and $\Theta^*\in\Omega^2(Z,\Pi^*)$ are the curvatures of the Chern connections 
associated to $K$ and $K^*$ as in Theorem~\ref{final_th}, respectively, 
then by using Proposition~\ref{curvature}\eqref{curvature_item2} along with equations \eqref{chern_eq1}--\eqref{chern_proof_eq1} one can show that $\Theta^*=-\Theta$.
\end{example}

By using a suitable method of localization of reproducing kernel Hilbert spaces on vector bundles, 
one can
obtain infinite-dimensional versions of the properties 
of Bergman kernels established in \cite{MP97}. 
Put $\delta_K:=(\zeta_K\circ\Pi,\widehat K)$, where $\zeta_K$ and 
$\widehat K$ are as in Definition~\ref{reprokernel}.

\begin{theorem}\label{holom}
Let $\Pi\colon D\to Z$ be a holomorphic Hermitian bundle
with finite-dimensional fibers 
and $K$ be 
a holomorphic admissible reproducing kernel on $\Pi$. 
Then the following assertions hold:
\begin{enumerate}
\item The mapping $\widehat{K}\colon D^*\to(\Hc^K)^*$ is holomorphic.
\item The pair $\Delta_K=(\delta_K,\zeta_K)$ 
is a holomorphic morphism of vector bundles from $\Pi^*\colon D^*\to Z$ 
to  $\Pi_{(\Hc^K)^*}\colon\Tc((\Hc^K)^*)\to\Gr((\Hc^K)^*)$.
\end{enumerate}
\end{theorem}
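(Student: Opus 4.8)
\textbf{Proof proposal for Theorem~\ref{holom}.}
The plan is to reduce both assertions to the holomorphy of a single map, namely $\widehat K$ on $D^*$, and then to recognize $\Delta_K$ as essentially the bundle map $\Psi_\Hc$ of Remark~\ref{exact} composed with the canonical identification between the quotient tautological bundle and the tautological bundle of $(\Hc^K)^*$. First I would fix the viewpoint that makes the complex structures transparent: as explained in the Example preceding the statement, the assignment $\xi\mapsto(\cdot\mid\xi)_{\Hc^K}$ identifies $(\Hc^K)^*$ with the conjugate Hilbert space, so that a map into $(\Hc^K)^*$ is holomorphic precisely when the corresponding map into $\overline{\Hc^K}$ is, i.e.\ when it is \emph{antiholomorphic} as a map into $\Hc^K$. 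Concretely, $\widehat K\colon D\to\Hc^K$, $\widehat K(\xi)=K_\xi=K(\cdot,\Pi(\xi))\xi$, is by construction conjugate-linear on each fiber (it is built from $K(t,s)^*$, which appears in the inner product \eqref{reprokernel_eq2} conjugate-linearly in $\xi$); combined with the admissibility/smoothness hypothesis on $K$ and the fact that $K_\xi\in\Oc(Z,D)$ for every $\xi$, one checks in a local holomorphic trivialization that $\widehat K$ depends antiholomorphically on $\xi$. Passing to $D^*$ reverses the fiber complex structure, so $\widehat K\colon D^*\to(\Hc^K)^*$ becomes genuinely holomorphic; this is assertion (1).

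For assertion (2) I would first verify $\delta_K=(\zeta_K\circ\Pi,\widehat K)$ actually lands in the tautological total space $\Tc((\Hc^K)^*)=\{(\Sc,x)\mid x\in\Sc\}$: for $\xi\in D_s^*$ we have $\widehat K(\xi)\in\widehat K(D_s)\subseteq\overline{\widehat K(D_s)}=\zeta_K(s)$ by the very definition of $\zeta_K$ in Definition~\ref{reprokernel}, so $(\zeta_K(s),\widehat K(\xi))\in\Tc((\Hc^K)^*)$ as required, and $\delta_K$ covers $\zeta_K$ over $Z$. The pair $(\delta_K,\zeta_K)$ is fiberwise linear because $\widehat K|_{D_s^*}$ is linear (with the opposite complex structure it is conjugate-linear on $D_s$, hence linear on $D_s^*$), and it is injective on each fiber since admissibility together with Lemma~\ref{equivKinverse3} forces $\widehat K|_{D_s}$ to be injective. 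Holomorphy of $\delta_K$ follows from (1); holomorphy of $\zeta_K$ is exactly the statement recorded in the Remark after Definition~\ref{adm_def_finite}, valid because $K$ is admissible. Thus $\Delta_K$ is a holomorphic vector bundle morphism.

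The step I expect to be the main obstacle is the honest verification that $\widehat K$ is \emph{holomorphic} (equivalently antiholomorphic into $\Hc^K$) in the \emph{base} variable together with the fiber variable, uniformly enough to give a holomorphic section of the relevant $\Hom$-bundle rather than merely a fiberwise-linear smooth map. In a local holomorphic trivialization $\Pi^{-1}(V)\simeq V\times\Ec$ one must show that $v\mapsto K_{(z,v)}=\sum$(matrix entries of $K(\cdot,z)$)$\cdot v$, viewed as an $\Hc^K$-valued expression, varies holomorphically in $z$; this rests on the hypothesis that $K_\xi\in\Oc(Z,D)$ for each fixed $\xi$ \emph{and} on a local-boundedness/uniformity argument (e.g.\ using the continuity of the inclusion $\Hc^K\hookrightarrow\Oc(Z,D)$, weak holomorphy plus local boundedness implies holomorphy into the Hilbert space). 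I would handle this by testing against the total set $\{K_\eta:\eta\in D\}$: the scalar functions $z\mapsto(K_{(z,v)}\mid K_\eta)_{\Hc^K}=(K(\Pi(\eta),z)(z,v)\mid\eta)$ are holomorphic (resp.\ in the trivialization, after the complex-conjugate identification), they are locally bounded by admissibility and continuity of the evaluation maps, and a standard weak-to-strong holomorphy argument on Hilbert-space-valued maps then yields the claim. The remaining bookkeeping — that the fiberwise-linear structure assembles into a bundle morphism to $\Pi_{(\Hc^K)^*}$ — is routine once the complex structure on $(\Hc^K)^*$ is set up as above and $\zeta_K$ is known to be holomorphic.
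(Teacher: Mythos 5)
Your overall strategy --- localize to a holomorphic trivialization, exploit the fiberwise conjugate-linearity of $\widehat K$ so that passing to $D^*$ and $(\Hc^K)^*$ turns it into a holomorphic map, and then assemble $\Delta_K=(\delta_K,\zeta_K)$ into a bundle morphism --- is the same as the paper's. Your weak-to-strong holomorphy argument for assertion (1) (testing $z\mapsto K_{(z,v)}$ against the total family $\{K_\eta\}$ and invoking local boundedness) is a legitimate alternative to the paper's shorter route, which quotes the smoothness of $\widehat K$ from \cite[Lemma 3.3]{BG13}, writes $K$ locally through an operator-valued kernel $\kappa$ with $\kappa(\cdot,t)\in\Oc(Z,\Vc)$ and $\kappa(t,t)$ invertible, and then simply reads off from the formula $\widehat K(t,v)=(\cdot,\kappa(\cdot,t)v)$ that the differential of $\widehat K\colon Z\times\Vc^*\to(\Hc^K)^*$ is $\CC$-linear at every point, which together with smoothness gives holomorphy.

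There is, however, a genuine gap in assertion (2): you dispose of the holomorphy of the base map $\zeta_K\colon Z\to\Gr((\Hc^K)^*)$ by declaring it to be ``exactly the statement recorded in the Remark after Definition~\ref{adm_def_finite}''. That remark asserts only that $\zeta_K$ is \emph{smooth}; its holomorphy is one of the nontrivial claims of the theorem and does not follow formally from holomorphy of $\widehat K$ either, since $\zeta_K(s)=\overline{\widehat K(D_s)}$ is a Grassmannian-valued map and passing from a holomorphically varying family of generators of a (finite-dimensional) subspace to holomorphy of the induced map into $\Gr((\Hc^K)^*)$ requires an argument. The paper closes this step by combining the smoothness of $\zeta_K$ (coming from admissibility, i.e.\ the remark you cite) with the explicit local formula for $\widehat K$, from which one checks that the tangent map of $\zeta_K$ at every point is $\CC$-linear; alternatively one can argue with local holomorphic frames $z\mapsto\widehat K(z,e_j)$, whose values are linearly independent by Lemma~\ref{equivKinverse3}, and a chart of the Grassmannian. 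As written, your proof of (2) establishes the fiberwise-linear set-theoretic bundle map and the holomorphy of $\delta_K$ in the fiber direction, but leaves the holomorphy of $\zeta_K$ unproved.
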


\begin{proof}
By the note prior to the theorem, it suffices to prove the assertions under the assumption that 
$\Pi\colon D\to Z$ is a trivial vector bundle, say $D=Z\times \Vc$, 
where the complex Hilbert space $\Vc$ is the typical fiber. 
Recall from \cite[Lemma 3.3]{BG13} that $\widehat{K}\colon D\to\Hc^K$ is smooth. 
On the other hand, since $K$ is an admissible holomorphic reproducing kernel, 
it is given by an operator-valued reproducing kernel 
$\kappa\colon Z\times Z\to\Bc(\Vc)$ (see for instance \cite[subsect. 5.1]{BG13}) such that 
 $\kappa(\cdot,t)\in\Oc(Z,\Vc)$ and $\kappa(t,t)\in\Bc(\Vc)$ is invertible 
for every $t\in Z$. 
Then for every $\eta=(t,v)\in Z\times \Vc=D$ we have 
$$\widehat{K}(\eta)=(\cdot,\kappa(\cdot,t)v)\in\Oc(Z,D)$$ 
which implies at once that the differential of 
$\widehat{K}\colon D^*=Z\times\Vc^*\to(\Hc^K)^*$ at every point is $\CC$-linear, 
hence the mapping $\widehat{K}\colon D^*\to(\Hc^K)^*$ is holomorphic. 
Since $\kappa(t,\cdot)^*=\kappa(\cdot,t)\in\Oc(Z,\Vc)$ 
and $\zeta_K(t)=\widehat{K}(\{t\}\times\Vc)$, it also follows by the above formula 
for $\widehat{K}(\eta)$ that $\zeta_K\colon Z\to\Gr((\Hc^K)^*)$ is holomorphic, 
since it is smooth by the assumption that $K$ is admissible, and its tangent map at any point is $\CC$-linear.
\end{proof}

In connection with Theorem~\ref{holom}, we note that a certain 
procedure to associate linear connections $\Phi_K$ to reproducing kernels $K$ on infinite-dimensional vector bundles $\Pi$ was established in \cite{BG13}. 
That method relies on canonical pullback operations by starting from tautological bundles on Grassmann manifolds.
Then one can also prove that 
the linear connection $\Phi_{K^*}$ associated with $K^*$ is compatible 
both with the complex structure of the dual bundle $\Pi^*\colon D^*\to Z$ 
and with the Hermitian structure 
$\{(K^*(s,s)\cdot\mid\cdot)^*_s\}_{s\in Z}$ where $(K^*(s,s)\cdot\mid\cdot)_s^*:=\overline{(K(s,s)\cdot\mid\cdot)_s}$  
for all $s\in Z$.   
That is, on dual vector bundles of vector bundles with reproducing kernel (and finite-dimensional fibers), the covariant derivatives associated with the linear connections defined in \cite{BG13} are also examples of the Griffiths-positive Chern derivatives of Theorem~\ref{final_th} above. 
As the details of these results are beyond the scope of the present work, 
we defer them to a forthcoming paper.

\begin{remark}
\normalfont 
It is well known that there also exist holomorphic vector bundles with finite-dimensional fibers 
which 
do not admit any nontrivial global holomorphic cross-section, 
so they do not carry any reproducing kernel satisfying the hypothesis of Theorem~\ref{final_th}. 
An example in this sense (with 1-dimensional fibers) 
is provided by the tautological vector bundle over the projective space, 
in the above notation 
$$
\Pi^{(n)}\colon\Tc^{(n)}(\CC^{n+1})\to\Gr^{(n)}(\CC^{n+1})
$$
see for instance \cite[Ch. V, Cor. 15.6]{De12}, where this line bundle was denoted by $\Oc(-1)$. 
\end{remark}

\begin{remark}
\normalfont
In connection with Theorem~\ref{final_th}, 
we recall that there exist several open problems on sufficient conditions for Griffiths positivity. 
There is for instance Griffiths'problem (\cite{Gr69}) 
which asks whether or not every ample holomorphic vector bundle with compact base is Griffiths positive. 

It is also unknown whether any holomorphic vector bundle is Griffiths positive 
if there exists some integer $k_0\ge 1$ such that the symmetric $k$th tensor powers of that bundle 
are globally generated for all $k\ge k_0$. 
See however \cite[Ch. VII, Cor. 11.6]{De12} for an affirmative answer to that problem in the case of the line bundles. 
From the perspective of the above Theorem~\ref{final_th}, 
the problem would be to construct reproducing kernels on some Hermitian vector bundle 
by using some reproducing kernels on symmetric tensor powers of that bundle. 
\end{remark}

\appendix
\section{Complements on vector-valued differential forms}\label{SectA}

\begin{definition}\label{def1}
\normalfont
Assume $X$ is any open subset of some real Banach space $\Xc$ and $\Vc$ is another real Banach space. 
The space of \emph{$\Vc$-valued differential forms of degree $p\ge0$} on $X$ is defined by 
$$\Omega^p(X,\Vc)=
\begin{cases}
\Ci(X,\Vc)&\text{ if }p=0 \\
\Ci(X,\Bc(\wedge^p\Xc,\Vc))&\text{ if }p\ge1,
\end{cases} $$
where $\Bc(\wedge^p\Xc,\Vc)$ is the space of bounded $p$-linear skew-symmetric maps 
$\Xc\times\cdots\times\Xc\to\Vc$. 
For every $\sigma\in\Omega^p(X,\Vc)$ and $x\in X$ we denote $\sigma_x:=\sigma(x)$. 

The \emph{exterior derivative} $d\colon\Omega^p(X,\Vc)\to\Omega^{p+1}(X,\Vc)$ 
is defined for $\sigma\in\Omega^p(X,\Vc)$ as follows: 
\begin{enumerate} 
\item If $p=0$, then for every $x\in X$ we set $(\de\sigma)_x=\sigma'_x\in\Bc(\Xc,\Vc)$. 
\item If $p\ge1$, then for every $x\in X$ 
we define $(\de\sigma)_x=\de_x\sigma\in\Bc(\wedge^p\Xc,\Vc)$ as 
a bounded skew-symmetric $(p+1)$-linear mapping $\Xc\times\cdots\times\Xc\to\Vc$ 
by the formula 
$$(\de\sigma)_x(x_1,\dots,x_{p+1})
=\sum_{j=1}^{p+1}(-1)^{j-1}(\sigma'_x(x_j))(x_1,\dots,x_{j-1},x_{j+1},\dots,x_{p+1}) $$
for every $x_1,\dots,x_{p+1}\in\Xc$. 
Note that $\sigma\colon X\to\Bc(\wedge^p\Xc,\Vc)$ is a smooth mapping, hence  $\sigma'_x\in\Bc(\Xc,\Bc(\wedge^p\Xc,\Vc))$. 
\end{enumerate}
We have $d^2=0$ 
(see for instance \cite{Lg01}) 
as an operator from $\Omega^p(X,\Vc)$ into $\Omega^{p+2}(X,\Vc)$ for every $p\ge 0$. 
\end{definition}

\begin{definition}\label{def2}
\normalfont
Now assume the following setting: 
\begin{itemize}
\item $X$ open set in the real Banach space $\Xc$; 
\item $\Vc_1$, $\Vc_2$, and $\Vc$ are real Banach spaces endowed with 
a continuous bilinear mapping $\Vc_1\times\Vc_2\to\Vc$ denoted simply by 
$(v_1,v_2)\mapsto v_1\cdot v_2$. 
\end{itemize}
Then for $p_1,p_2\ge0$ we define the \emph{exterior product} 
(cf.\ \cite[Subsect. I.4]{Ne06})
$$
\wedge\colon \Omega^{p_1}(X,\Vc_1)\times \Omega^{p_2}(X,\Vc_2)\to\Omega^{p_1+p_2}(X,\Vc)
$$
in the following way. 
Let $\sigma_j\in\Omega^{p_j}(X,\Vc_j)$, $j=1,2$, 
and $x\in X$.  
\begin{enumerate} 
\item If $p_1=0$, then  we set
$$(\sigma_1\wedge\sigma_2)_x(x_1,\dots,x_{p_2})
=\underbrace{(\sigma_1)_x}_{\hskip15pt\in\Vc_1}\cdot
\underbrace{(\sigma_2)_x(x_1,\dots,x_{p_2})}_{\hskip15pt\in\Vc_2}\in\Vc$$
for $x_1,\dots,x_{p_2}\in\Xc$. 
We proceed in a similar manner if $p_2=0$. 
\item If $p_1,p_2\ge1$, then 
$$\begin{aligned}
(\sigma_1\wedge\sigma_2)_x(x_1,\dots,x_{p_1+p_2})
=\frac{1}{p_1!p_2!}\sum_{\tau}
\epsilon(\tau) &
\underbrace{(\sigma_1)_x(x_{\tau(1)},\dots,x_{\tau(p_1)})}_{\hskip15pt\in\Vc_1} \\
& \cdot 
\underbrace{(\sigma_2)_x(x_{\tau(p_1+1)},\dots,x_{\tau(p_1+p_2)})}_{\hskip15pt\in\Vc_2} 
\end{aligned}$$
where the sum is taken for every permutation $\tau$ of the set $\{1,\dots,p_1+p_2\}$ and $\epsilon(\tau)\in\{\pm1\}$ denotes the signature of $\tau$ 
(compare \cite[Ch. V, \S 3]{Lg01}). 
\end{enumerate}
Just as in the scalar-valued case, one can check the formula 
$$\de(\sigma_1\wedge\sigma_2)=\de\sigma_1\wedge\de\sigma_2+(-1)^{p_1}\sigma_1\wedge\de\sigma_2 $$
for $\sigma_j\in\Omega^{p_j}(X,\Vc_j)$ and $j=1,2$. 
\end{definition}

\begin{remark}\label{compl}
\normalfont
Let $\Yc$ and $\Vc$ be complex Banach spaces 
and denote by $\Bc_{\RR}(\Yc,\Vc)$ 
the Banach space of bounded $\RR$-linear operators from $\Yc$ into $\Vc$. 
Also denote by 
$$\Bc_{\RR}^{(0,1)}(\Yc,\Vc)=\{T\in\Bc_{\RR}(\Yc,\Vc)\mid 
(\forall x\in\Yc)\quad T(\ie x)=-\ie Tx\}$$
the space of bounded conjugate-linear operators from 
$\Yc$ into $\Vc$, 
and use for the moment the notation $\Bc_{\RR}^{(1,0)}(\Yc,\Vc):=\Bc(\Yc,\Vc)$ 
for the space of $\CC$-linear operators. 

Then we note the following facts:

\begin{enumerate}
\item\label{compl_item1} 
The mapping 
\begin{equation}\label{compl_eq1}
\Bc_{\RR}^{(1,0)}(\Yc,\Vc)\times \Bc_{\RR}^{(0,1)}(\Yc,\Vc)\to \Bc_{\RR}(\Yc,\Vc), 
\quad (R,S)\mapsto R+S
\end{equation}
is a linear topological isomorphism. 
Indeed, it is clear that this mapping is linear and continuous, 
hence it suffices to prove that it is bijective. 
In fact it is easily checked that for every $T\in\Bc_{\RR}(\Yc,\Vc)$ 
there exist uniquely determined operators $T^{(1,0)}\in\Bc_{\RR}^{(1,0)}(\Yc,\Vc)$  and $T^{(0,1)}\in\Bc_{\RR}^{(0,1)}(\Yc,\Vc)$ such that $T=T^{(1,0)}+T^{(0,1)}$, namely 
$$T^{(1,0)}x=\frac{1}{2}(Tx-\ie T(\ie x))\text{ and }
T^{(0,1)}x=\frac{1}{2}(Tx+\ie T(\ie x)) $$
for every $x\in\Yc$. 
\item\label{compl_item2} 
Each of the spaces 
$\Bc_{\RR}^{(1,0)}(\Yc,\Vc)$ and $\Bc_{\RR}^{(0,1)}(\Yc,\Vc)$ 
has a natural structure of \emph{complex} Banach space, 
defined by multiplying the values of any operator by complex numbers 
(This is possible since $\Vc$ is a complex Banach space). 
\item\label{compl_item3} 
By using the above items \eqref{compl_item1} and \eqref{compl_item2}, 
one can obtain direct sum decompositions similar to \eqref{compl_eq1} 
for spaces of $\RR$-multilinear mappings in a higher number of variables. 
For instance, for bilinear mappings we have 
\allowdisplaybreaks
\begin{align}
\Bc_{\RR}(\Yc\hotimes_{\RR}\Yc,\Vc)
\simeq
&\Bc_{\RR}(\Yc,\Bc_{\RR}(\Yc,\Vc)) \nonumber\\
\simeq
&\Bc_{\RR}(\Yc,\Bc_{\RR}^{(1,0)}(\Yc,\Vc)\dotplus\Bc_{\RR}^{(0,1)}(\Yc,\Vc)) 
\nonumber\\
\simeq
&\Bc_{\RR}(\Yc,\Bc_{\RR}^{(1,0)}(\Yc,\Vc))
\dotplus\Bc(\Yc,\Bc_{\RR}^{(0,1)}(\Yc,\Vc))\nonumber\\
\simeq 
&\Bc_{\RR}^{(1,0)}(\Yc,\Bc_{\RR}^{(1,0)}(\Yc,\Vc))
\dotplus \Bc_{\RR}^{(1,0)}(\Yc,\Bc_{\RR}^{(0,1)}(\Yc,\Vc)) \nonumber\\
&\dotplus \Bc_{\RR}^{(0,1)}(\Yc,\Bc_{\RR}^{(1,0)}(\Yc,\Vc))
\dotplus \Bc_{\RR}^{(0,1)}(\Yc,\Bc_{\RR}^{(0,1)}(\Yc,\Vc))
\nonumber
\end{align}
\item\label{compl_item4} 
We now use the above remarks to obtain a direct sum decomposition 
for the space of skew-symmetric $\RR$-bilinear maps from $\Yc\times\Yc$ 
into $\Vc$. 
Let us consider the bounded $\RR$-linear operator  
$$
A\colon\Yc\hotimes_{\RR}\Yc\to\Yc\hotimes_{\RR}\Yc,
\quad A(y_1\otimes y_2)=\frac{1}{2}(y_1\otimes y_2-y_2\otimes y_1). 
$$
Then we have $A^2=A$, and we define $\Yc\wedge\Yc:=\Ran A$. 
We also define 
$$\Ac\colon \Bc_{\RR}(\Yc\hotimes_{\RR}\Yc,\Vc)\to\Bc_{\RR}(\Yc\hotimes_{\RR}\Yc,\Vc), 
\quad \Ac(\Phi):=\Phi\circ A$$
and then $\Ac^2=\Ac$ and it is easily seen that 
$$
\Ran\Ac=\{\Phi\in\Bc_{\RR}(\Yc\hotimes_{\RR}\Yc,\Vc)\mid 
(\forall y_1,y_2\in\Yc)\quad \Phi(y_1\otimes y_2)=-\Phi(y_2\otimes y_1) \}.
$$ 
In order to study the behavior of $\Ac$ with respect to the direct sum decomposition established in \eqref{compl_item3} above, 
we introduce the operator 
$$
\Qc\colon 
\Bc_{\RR}(\Yc\hotimes_{\RR}\Yc,\Vc)\to\Bc_{\RR}(\Yc\hotimes_{\RR}\Yc,\Vc), 
\quad (\Qc(\Phi))(y_1\otimes y_2)=\Phi((\ie y_1)\otimes(\ie y_2)).
$$
Then it is easily seen that  
$\Qc^2=\id$ and $\Qc\Ac=\Ac\Qc$, hence 
we have the direct sum decomposition 
\begin{equation}\label{compl_eq2}
\Bc_{\RR}(\Yc\hotimes_{\RR}\Yc,\Vc)=\Ker(\Qc-\id)\dotplus\Ker(\Qc+\id) 
\end{equation}
and both subspaces involved in this decomposition are invariant under $\Ac$. 
On the other hand, it is easily seen that 
$$\begin{aligned}
\Bc_{\RR}^{(1,0)}(\Yc,\Bc_{\RR}^{(1,0)}(\Yc,\Vc))
\dotplus \Bc_{\RR}^{(0,1)}(\Yc,\Bc_{\RR}^{(0,1)}(\Yc,\Vc)) 
&\subseteq\Ker(\Qc+\id), \\  
\Bc_{\RR}^{(1,0)}(\Yc,\Bc_{\RR}^{(0,1)}(\Yc,\Vc)) \dotplus \Bc_{\RR}^{(0,1)}(\Yc,\Bc_{\RR}^{(1,0)}(\Yc,\Vc))
&\subseteq\Ker(\Qc-\id). 
\end{aligned}$$
It then follows by \eqref{compl_eq2} and the decomposition established above in \eqref{compl_item3} that the above inclusions are actually equalities. 
In particular, the space 
$$\Bc_{\RR}^{(1,0)}(\Yc,\Bc_{\RR}^{(0,1)}(\Yc,\Vc)) \dotplus \Bc_{\RR}^{(0,1)}(\Yc,\Bc_{\RR}^{(1,0)}(\Yc,\Vc))$$ 
is invariant under $\Ac$, 
and we denote by $\Bc_{\RR}^{(1,1)}(\Yc\hotimes\Yc,\Vc)$ the image of the corresponding restriction of $\Ac$. 
On the other hand, it is easily checked that each of the spaces 
$\Bc_{\RR}^{(1,0)}(\Yc,\Bc_{\RR}^{(1,0)}(\Yc,\Vc))$ 
and 
$\Bc_{\RR}^{(0,1)}(\Yc,\Bc_{\RR}^{(0,1)}(\Yc,\Vc))$ 
is invariant under $\Ac$, and 
$\Bc_{\RR}^{(2,0)}(\Yc\hotimes\Yc,\Vc)$ 
and $\Bc_{\RR}^{(0,2)}(\Yc\hotimes\Yc,\Vc)$ will denote the images of the corresponding restrictions of 
$\Ac$, 
respectively.
We thus get a direct sum decomposition 
$$\Ran\Ac=\Bc_{\RR}^{(2,0)}(\Yc\hotimes\Yc,\Vc)\dotplus
\Bc_{\RR}^{(1,1)}(\Yc\hotimes\Yc,\Vc)\dotplus 
\Bc_{\RR}^{(0,2)}(\Yc\hotimes\Yc,\Vc).$$
Note the natural isomorphism 
$\Ran\Ac\simeq\Bc_{\RR}(\Yc\wedge\Yc,\Vc)$, $\Phi\mapsto\Phi\vert_{\Ran A}$, 
and thus the above decomposition gives rise to a direct sum decomposition 
$$\Bc_{\RR}(\Yc\wedge\Yc,\Vc)=\Bc_{\RR}^{(2,0)}(\Yc\wedge\Yc,\Vc)\dotplus
\Bc_{\RR}^{(1,1)}(\Yc\wedge\Yc,\Vc)\dotplus 
\Bc_{\RR}^{(0,2)}(\Yc\wedge\Yc,\Vc). $$
For every $\Phi\in\Bc_{\RR}(\Yc\wedge\Yc,\Vc)$ we denote by 
$\Phi=\Phi^{(2,0)}+\Phi^{(1,1)}+\Phi^{(0,2)}$ the corresponding decomposition. 
This is the bilinear version of the decomposition established above in~\eqref{compl_item1}. 
\end{enumerate}
\normalfont
\end{remark}

\begin{definition}\label{dbar}
\normalfont
Let $\Xc$ and $\Vc$ be complex Banach spaces. 
If $p\in\{1,2\}$,  
then by using Remark~\ref{compl} for the values of the differential forms in $\Omega^p(X,\Vc)$, we get 
the direct sum decompositions 
\begin{equation}\label{dbar_eq1}
\Omega^1(X,\Vc)=\Omega^{(1,0)}(X,\Vc)\dotplus\Omega^{(0,1)}(X,\Vc) 
\end{equation} 
and 
$$
\Omega^2(X,\Vc)=\Omega^{(2,0)}(X,\Vc)\dotplus\Omega^{(1,1)}(X,\Vc)\dotplus\Omega^{(0,2)}(X,\Vc). 
$$
By using these decompositions we can define the operators 

$$
\bar\partial\colon\Ci(X,\Vc)\to\Omega^{(0,1)}(X,\Vc)
\text{ and }\bar\partial\colon\Omega^{(r,1-r)}(X,\Vc)\to\Omega^{(r,2-r)}(X,\Vc)
$$ 
for $r\in\{0,1\}$ as the corresponding projections of the exterior derivatives 
$$
\de\colon\Ci(X,\Vc)\to\Omega^1(X,\Vc)
\text{ and }\de\colon\Omega^1(X,\Vc)\to\Omega^2(X,\Vc),$$ 
respectively.  
We also define $\partial:=\de-\bar\partial$ on any of the above spaces where $\bar\partial$ is defined. 
\end{definition}

\begin{remark}\label{partial}
\normalfont
In the setting of Definition~\ref{dbar}, we have $\bar\partial^2=0$, $\partial^2=0$, 
and 
$$
\de\colon\Omega^{(r,s)}(X,\Vc)\to\Omega^{(r+1,s)}(X,\Vc)\dotplus\Omega^{(r,s+1)}(X,\Vc),
$$
hence $\partial\colon\Omega^{(r,s)}(X,\Vc)\to\Omega^{(r+1,s)}(X,\Vc)$ for $r,s\in\{0,1\}$ with $r+s=1$. 
\end{remark}

\begin{remark}\label{hol}
\normalfont
In Definition~\ref{dbar}, a function $\sigma\in\Ci(X,\Vc)$ is holomorphic if and only if its differential is $\CC$-linear at every point of $X$, which is equivalent to the Cauchy-Riemann equation $\bar\partial\sigma=0$.
\end{remark}

We refer to \cite[Sect. 2]{Ll98} for a definition of the Dolbeault operator $\bar\partial$ in a more general setting.

\end{document}